\documentclass{amsart}

\usepackage[centertags]{amsmath}
\usepackage{amssymb,amsthm}
\usepackage[all]{xy}
\usepackage{hyperref}
\usepackage{graphicx}

\title[On the map of B\"okstedt-Madsen \dots ]{On the map of B\"okstedt-Madsen from the cobordism category 
to $A$-theory}
\author{George Raptis}
\address{Institut f\"{u}r Mathematik, 
         Universit\"{a}t Osnabr\"{u}ck, 
Albrechtstrasse 28a,
         D-49069 Osnabr\"{u}ck, 
         Germany}
\email{graptis@uni-osnabrueck.de}
\author{Wolfgang Steimle}
\address{Universit\"at Bonn\\
               Mathematisches Institut\\
               Endenicher Allee~60,
               D-53115 Bonn, Germany}
\email{steimle@math.uni-bonn.de}
%\date{\today}

\keywords{Cobordism category, bivariant $A$-theory, parametrized Euler characteristic.}
\subjclass[2000]{19D10, 57R90, 55R12.}

\DeclareMathOperator{\colim}{colim}

\DeclareMathOperator{\cyl}{cyl}
\DeclareMathOperator{\Diff}{Diff}
\DeclareMathOperator{\Emb}{Emb}

\DeclareMathOperator*{\hocolim}{hocolim}

\DeclareMathOperator*{\holim}{holim}

\DeclareMathOperator{\id}{id}

\DeclareMathOperator{\map}{map}
\DeclareMathOperator{\mor}{mor}

\DeclareMathOperator{\sing}{sing}
\DeclareMathOperator{\sintop}{sing^{top}}
\DeclareMathOperator{\simp}{simp}

\newcommand{\cob}{\mathcal{C}_{d}}
\newcommand{\cobn}{\mathcal{C}_{d,n}}
\newcommand{\cobd}{\mathcal{C}_{d, \partial}}
\newcommand{\cobdn}{\mathcal{C}_{d,\partial,n}}
\DeclareMathOperator{\coass}{\mathrm{\nabla}}
\DeclareMathOperator{\scan}{\mathrm{scan}}

\renewcommand{\sin}{\mathrm{sing}}
\newcommand{\ob}{\mathrm{ob}}
\newcommand{\mto}{\mathrm{MTO}}
\newcommand{\Th}{\mathrm{Th}}
\newcommand{\Gr}{\mathrm{Gr}}

\newcommand{\cat}{\mathbf{cat}}

\newcommand{\inv}{^{-1}}
\newcommand{\fibr}[2]{\begin{pmatrix} {#1} \\ \downarrow \\ {#2}\end{pmatrix}}

\newcommand{\RR}{\mathbb{R}}

\newcommand{\NN}{\mathbb{N}}

\newcommand{\calR}{\mathcal{R}}
\newcommand{\calU}{\mathcal{U}}
\newcommand{\calI}{\mathcal{I}}
\newcommand{\calC}{\mathcal{C}}
\renewcommand{\hom}{\underline{\mathrm{Hom}}}
\renewcommand{\cyl}{\mathrm{Cyl}}

\newcommand{\Rfd}{\mathcal{R}^{hf}}
\newcommand{\Rd}{\mathcal{R}^{\delta}}
\newcommand{\Rdd}{\overline{\mathcal{R}}^{\delta}}

\newcommand{\fRfd}{\mathcal{R}^{hf}_{\mathrm{fib}}}

\newcounter{commentcounter}

%

%------------------------------------------------------------------------
%-------------------------------------------------------------------------

\begin{document}

\theoremstyle{plain}
\newtheorem{thm}{Theorem}
\newtheorem{cor}[thm]{Corollary}
\newtheorem{lem}[thm]{Lemma}
\newtheorem{prop}[thm]{Proposition}
\newtheorem{claim}[thm]{Claim}
\theoremstyle{definition}
\newtheorem{defn}[thm]{Definition}
\newtheorem{obs}[thm]{Observation}
\newtheorem{constr}[thm]{Construction}

\theoremstyle{remark}
\newtheorem{rem}[thm]{Remark}

\setcounter{secnumdepth}{2}
\numberwithin{thm}{subsection}

\SelectTips{eu}{10}
\renewcommand{\theenumi}{\roman{enumi}}
\renewcommand{\labelenumi}{\textup{(}\theenumi\textup{)}}
\renewcommand{\theequation}{\arabic{equation}}

%---------------------------------------------------
\begin{abstract}
B\"okstedt and Madsen defined an infinite loop map from the embedded $d$-dimensional cobordism category of Galatius, 
Madsen, Tillmann and Weiss to the algebraic $K$-theory of $BO(d)$ in the sense of Waldhausen. 
The purpose of this paper is to establish two results in relation to this map. The first result is 
that it extends the universal parametrized $A$-theory Euler characteristic of smooth bundles with compact 
$d$-dimensional fibers, as defined by Dwyer, Weiss and Williams. The second result is that it actually 
factors through the canonical unit map $Q(BO(d)_+) \to A(BO(d))$. 
\end{abstract}

\maketitle

%-------------------------------------------------
%------------------------------------------------

\section{Introduction}

The parametrized Euler characteristic was defined by Dwyer, Weiss and Williams in \cite{Dwyer-Weiss-Williams(2003)} for fibrations whose fibers are homotopy equivalent to a finite CW complex. Broadly speaking, the Euler characteristic of such a fibration $p: E \to B$ is a map that associates to every $b \in B$ the Euler class of the fiber $p \inv(b)$. The precise definition, which is given in terms of Waldhausen's algebraic $K$-theory of spaces ($A$-theory) \cite{Waldhausen(1985)}, produces this way a section of an associated fibration 
$$A_B(p): A_B(E) \to B$$ that is defined by applying the $A$-theory functor to $p$ fiberwise.
 
In the case where the fibration is actually a smooth fiber bundle and the fibers are compact smooth $d$-manifolds, possibly with boundary, the ``smooth Riemann-Roch theorem'' of \cite{Dwyer-Weiss-Williams(2003)} asserts that this fiberwise Euler characteristic can be identified with the composition of a stable transfer map, in the sense of Becker and Gottlieb \cite{BeGo}, followed by the unit transformation from stable homotopy to algebraic $K$-theory. More concretely, if we consider the vertical tangent bundle of the smooth fiber bundle $p: E \to B$ and pass to $BO(d)$, the parametrized $A$-theory Euler characteristic gives a map
\[\chi^{DWW} \colon B\to A(BO(d)),\]
and according to the smooth Riemann-Roch theorem, the diagram
\begin{equation} \label{smooth_RR_intro}
\xymatrix{
B \ar[rr]^{tr} \ar[rrd]_{\chi^{DWW}} && Q(BO(d)_+) \ar[d]^\eta\\
&& A(BO(d))   
}
\end{equation}
is commutative up to homotopy, where the map $tr$ is given by the classical Becker-Gottlieb transfer and $\eta$ 
denotes the unit map at $BO(d)$.

Let $\cob$ be the embedded $d$-dimensional cobordism category of \cite{GMTW(2009)}. Roughly speaking, the 
objects are closed smooth $(d-1)$-manifolds and the morphisms are cobordisms between them, all embedded in 
some high dimensional Euclidean space. Every closed smooth $d$-manifold $M$, embedded in some high dimensional Euclidean space, may be regarded as a cobordism from the empty manifold to itself and therefore it defines a loop in $B\cob$. This rule defines a map 
\[i_M: B\Diff(M)\to \Omega B\cob
\]
where $B\Diff(M)$ is the classifying space of smooth fiber bundles with fiber $M$. Recently, B\"okstedt and Madsen \cite{Boekstedt-Madsen(2011)} defined an infinite loop 
map
\[\tau\colon \Omega B\cob\to A(BO(d))\]
which, in non-technical language, is given by viewing an $n$-simplex in the nerve of $\cob$ as a filtered space equipped with a map to $BO(d)$ defined by the tangent bundle.  This raises naturally the following two questions: 
\begin{enumerate}
\item[(1)] Does the restriction of the map $\tau$ to $B\Diff(M)$ agree up to homotopy with the parametrized $A$-theory Euler characteristic of the universal bundle over $B\Diff(M)$?
\item[(2)]  Does the map $\tau$ also factor up to homotopy through stable homotopy, via the unit map $\eta$, as in the smooth Riemann-Roch theorem above?
\end{enumerate}
B\"okstedt and Madsen  \cite{Boekstedt-Madsen(2011)} expressed their belief that the answer to both questions is affirmative.

The purpose of this paper is to show that both statements are indeed true. The question of extending the universal parametrized $A$-theory Euler characterstic to the cobordism category can be regarded as a question about the 
additivity property of the parametrized $A$-theory Euler characteristic with respect to the fiber. Assuming that (1) is 
true, then Question (2) can also be regarded as a question about a structured additivity property of the factorization of
the universal parametrized $A$-theory Euler characteristic through the unit map as in Diagram \eqref{smooth_RR_intro}.
%Although a proof of (2) (assuming (1)) along these lines should be possible, ours follows a different route and is probably %technically a lot simpler. 
The first main ingredient in the proofs is to consider the cobordism category $\cobd$ of 
compact smooth manifolds with boundary, studied by Genauer \cite{Genauer(2008)}, which contains $\cob$ as a subcategory. The B\"okstedt-Madsen map can be extended to a map $$\tilde \tau\colon \Omega B\cobd\to A(BO(d)).$$ 

The space $\Omega B\cobd$ receives a map from $B\Diff(M)$, defined as before, for every $M$ compact smooth $d$-manifold, possibly with boundary.  In Theorem \ref{thm:main_result_1}, we show that the restriction of $\tilde\tau$ to $B\Diff(M)$ agrees up to homotopy with the composition of the universal parametrized $A$-theory Euler characteristic followed by the map to $A(BO(d))$ defined  by the vertical tangent bundle. The proof uses the second main ingredient, namely, that the universal bundle over $B\Diff(M)$ defines a \emph{bivariant A-theory characteristic} in the bivariant $A$-theory of the bundle (see \cite{Williams(2000)}), and that the universal parametrized $A$-theory Euler characteristic is the image of this characteristic under a \emph{coassembly map}. Since a basic problem in comparing all these maps is to find first the right identifications between the various models used to represent the various homotopy types, bivariant $A$-theory becomes extremely useful here, because it can offer a unifying perspective.

The homotopy type of $\Omega B\cobd$ was identified by Genauer \cite{Genauer(2008)} to be equivalent to $Q (BO(d)_+)$. To answer Question (2), we show in Theorem \ref{thm:main_result_2} that, under this identification, 
the map $\tilde\tau$ agrees with the unit map. This provides a geometric description of the unit map at $BO(d)$ in terms of smooth $d$-dimensional cobordisms. As consequence of this, the B\"okstedt-Madsen map $\tau$ factors up to homotopy as the following composition of a parametrized Pontryagin-Thom collapse map with the unit map:
$$\Omega B \cob \stackrel{\sim}{\to} \Omega^{\infty} \mto(d) \to Q(BO(d)_+) \to A(BO(d))$$
where the first map is the weak equivalence of \cite{GMTW(2009)} and the second map is defined by the canonical 
inclusion of Thom spectra. In particular, the homotopy commutativity of Diagram \eqref{smooth_RR_intro} is also a consequence of these two theorems.

\subsection*{Organization of the paper} In section 2, we recall the definitions of the cobordism categories $\cob$ and $\cobd$ and 
state the main results about their homotopy types from \cite{GMTW(2009)} and \cite{Genauer(2008)} respectively. In section 3 and 
appendix A, we discuss the bivariant $A$-theory of a fibration and study some of its properties. Only very special instances of 
bivariant $A$-theory will appear in the proofs of the main results, however we hope that the results here will also be of 
independent interest. In section 4, we review the construction of the $A$-theory coassembly map and recall the definition of the 
parametrized $A$-theory Euler characteristic from \cite{Dwyer-Weiss-Williams(2003)}, \cite{Williams(2000)}. In section 5, we prove 
the main results of the paper, answering Questions (1) and (2) above. Finally, in section 6, we end with a couple of remarks. First, we explain how our result 
generalize to cobordism categories with arbitrary tangential structures, and second, we comment on the connection with the work 
of Tillmann \cite{Tillmann(1999)} where a map analogous to the B\"okstedt-Madsen map was defined in the case of (a discrete 
version of) the oriented $2$-dimensional cobordism category. 

\subsection*{Acknowledgements}
The first author would like to thank the Mathematical Institute of the University of Bonn for its hospitality. The second 
author was supported by the Hausdorff-Center for Mathematics. He would also like to thank Johannes Ebert for helpful conversations.

\section{The cobordism categories $\cob$ and $\cobd$}

In this section we recall the main results about the homotopy types of the embedded $d$-dimensional cobordism categories $\cob$ and $\cobd$ from \cite{GMTW(2009)} and \cite{Genauer(2008)} respectively. 

\subsection{The cobordism category $\cob$.} For every 
$n \in \NN \cup \{\infty\}$, there is a topological category $\cobn$ defined as follows. An object of $\cobn$ is a pair $(M, a)$ where $a \in \RR$ and $M$ is a closed smooth $(d-1)$-dimensional submanifold of $\RR^{d-1 + n}$. (For $n= \infty$, 
define $\RR^{d-1 + \infty} := \colim_{n\to\infty} \RR^{d-1 + n}$ with the weak topology.) A non-identity morphism from $(M_0,a_0)$ to $(M_1, a_1)$ is a triple $(W, a_0,a_1)$ where $a_0 < a_1$ and $W$ is a compact smooth $d$-dimensional submanifold of $[a_0,a_1] \times \RR^{d-1+n}$ such that for some $\epsilon > 0$, we have:
\begin{itemize}
\item[(i)] $W \cap ([a_0, a_0 +\epsilon) \times \RR^{d-1 + n}) = [a_0, a_0 + \epsilon) \times M_0$
\item[(ii)] $W \cap ((a_1 - \epsilon, a_1] \times \RR^{d-1 + n}) = (a_1 - \epsilon, a_1] \times M_1$
\item[(iii)] $\partial W = W \cap (\{a_0, a_1\} \times \RR^{d-1+ n})$. 
\end{itemize}
Composition is defined by taking the union of subsets of $\RR \times \RR^{d-1 + n}$. The identities are formally added and regarded as ``thin'' product cobordisms. We abbreviate $$\cob :=  \mathcal{C}_{d, \infty} = \underset{n \to \infty}{\colim} \cobn.$$ 

The topology is defined as follows. For technical reasons, we work here with the slightly modified model discussed 
in \cite[Remarks 2.1(ii) and 4.5]{GMTW(2009)}. Set $$B_n(M) = \Emb(M, \RR^{d-1 + n}) / \Diff(M).$$ 
Let $\RR^{\delta}$ denote the set of real numbers with the discrete topology. The space of objects $\ob \cobn$ is 
$$\ob \cobn \cong \RR^{\delta} \times \underset{M}{\coprod} B_n(M)$$
where $M$ varies over the diffeomorphism classes of closed $(d-1)$-manifolds. By Whitney's embedding theorem, 
the space $\Emb(M, \RR^{d-1 + \infty})$ is contractible, and so there is a homotopy equivalence $B_{\infty}(M) \simeq B \Diff(M)$. 

The definition of the topology on the morphisms is similar, but requires in addition that the collars are preserved under
 the diffeomorphisms. In detail, given a cobordism $(W, h_0, h_1)$ from $M_0$ to $M_1$ with collars 
$h_0: [0,1) \times M_0 \to W$ and $h_1: (0,1] \times M_1 \to W$, and $0 < \epsilon < 1/2$, let 
$$\Emb_{\epsilon}(W, [0,1] \times \RR^{d-1 + n})$$ be the subspace of smooth embeddings that restrict to 
product embeddings on the $\epsilon$-neighborhood of the collared boundary (see \cite{GMTW(2009)} for a more precise definition).
This technical assumption is crucial in order to have a well-defined composition of morphisms. Set 
$$\Emb(W, [0,1] \times \RR^{d-1 + n}): = \underset{\epsilon \to 0}{\colim}{\Emb_{\epsilon}(W, [0,1] \times \RR^{d-1 + n})}.$$ 
Let $\Diff_{\epsilon}(W)$ denote the group of diffeomorphisms of $W$ that restrict to product diffeomorphisms on the 
$\epsilon$-neighborhood of the collared boundary. Set 
$$\Diff(W) = \Diff(W,h_0,h_1) : = \underset{\epsilon \to 0}{\colim}{\Diff_{\epsilon}(W)}.$$
There is a principal $\Diff(W)$-action on $\Emb(W, [0,1] \times \RR^{d-1 + n})$. Set 
$$B_n(W):=\Emb(W, [0,1] \times \RR^{d-1 + n}) / \Diff(W).$$ 
Then the space of morphisms $\mor \cobn$ is 
$$\mor \cobn \cong \ob \cob \sqcup \underset{W}{\coprod} ((\RR^{2}_+)^{\delta} \times B_n(W))$$
where $W=(W, h_0, h_1)$ varies over the diffeomorphism classes of $d$-dimensional cobordisms and $(\RR^{2}_+)^{\delta}$ 
denotes the open half plane $\{(a_0,a_1) \colon a_0 < a_1 \}$ with the discrete topology. We also have a homotopy
equivalence $B_{\infty}(W) \simeq B \Diff(W)$. 

We will be mainly interested in the ``stable'' case $n = \infty$. We recall the main result of \cite{GMTW(2009)} that identifies the homotopy 
type of the classifying space $B \cob$. Let $\Gr_d(\RR^{d+k})$ be the Grassmannian of $d$-dimensional linear subspaces in $\RR^{d+k}$ and consider the two standard bundles over it : the tautological $d$-dimensional vector bundle $\gamma_{d,k}$ and its $k$-dimensional complement $\gamma^{\perp}_{d,k}$. The spectrum $\mto (d)$ is the Thom spectrum associated to 
the inverse of the tautological vector bundle $\gamma_d :=\gamma_{d,\infty}$ over $\Gr_d(\RR^{d+ \infty})$, i.e.  
$$\mto (d)_{d+k} \colon = \Th (\gamma^{\perp}_{d,k})$$ and the structure maps are induced, after passing to Thom spaces, from the pullback diagrams, 
\[
\xymatrix{
\gamma^{\perp}_{d,k} \oplus \epsilon^1 \ar[d] \ar[r] & \gamma^{\perp}_{d,k+1} \ar[d] \\
\Gr_d(\RR^{d+k}) \ar[r] & \Gr_d(\RR^{d+k+1}).
}
\]

\begin{thm}[Galatius-Madsen-Tillmann-Weiss \cite{GMTW(2009)}] \label{GMTW}
There is a weak equivalence $$\alpha \colon B \cob \stackrel{\sim}{\longrightarrow} \Omega^{\infty -1} \mto (d).$$
\end{thm}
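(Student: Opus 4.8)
The plan is to follow the strategy of Galatius--Madsen--Tillmann--Weiss: reformulate the statement in terms of sheaves on the category of smooth manifolds, link both sides by a zig-zag of weak equivalences of such sheaves, and carry out the decisive comparison by a scanning argument. First I would reinterpret $B\cob$ as the representing space of the sheaf $X \mapsto B\cob(X)$ obtained by regarding $\cob$ as a sheaf of topological categories, and likewise exhibit $\Omega^{\infty-1}\mto(d)$ as the representing space of a suitable geometric sheaf of sections built from the spaces $\Th(\gamma^{\perp}_{d,k})$; it then suffices to compare these two sheaves. The contractibility of $\Emb(M,\RR^{\infty})$ and the colimit definitions recalled above reduce everything to this sheaf-theoretic statement.

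The first half of the argument replaces the cobordism category by a space of manifolds. I would introduce the sheaf $\Psi_d$ with $\Psi_d(X)$ the space of $d$-dimensional submanifolds of $X\times\RR\times\RR^{\infty}$ --- the first $\RR$-factor being a distinguished ``time'' coordinate --- that are closed as subsets and submersive over $X$, together with a variant $\Psi_d^{\mathrm{cut}}$ recording in addition a finite ordered tuple of regular values of the time coordinate. Three moves are then needed. (i) The nerve $N_{\bullet}\cob$ is modelled levelwise by $\Psi_d^{\mathrm{cut}}$: an $n$-simplex is a $d$-manifold over $X$ cut by $n$ ordered regular values into a composable string of cobordisms; this identifies $B\cob$ with the representing space of $|\Psi^{\mathrm{cut}}_{d,\bullet}|$. (ii) Forgetting the cut data is a weak equivalence, since the poset of finite sets of regular values is homotopy filtered, so the relevant homotopy colimit does not change the homotopy type. (iii) Forgetting the distinguished time coordinate is a weak equivalence, by the Phillips--Gromov submersion h-principle combined with the genericity (Sard's theorem) of regular values: on a fixed family of submanifolds the space of admissible proper time functions is contractible in the parametrized sense. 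Combining (i)--(iii) yields a weak equivalence between $B\cob$ and the representing space of the sheaf of $d$-manifolds in Euclidean space.

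The second half is the scanning argument, identifying that representing space with $\Omega^{\infty-1}\mto(d)$. For each finite-dimensional stage there is a scanning map from the space of $d$-manifolds in $\RR^{N}$ to a space of sections of the bundle whose fibre is the space of germs of $d$-manifolds at a point; a local analysis --- shrinking a germ onto its linear model, the empty germ serving as basepoint --- identifies that fibre with the Thom space $\Th(\gamma^{\perp}_{d,k})$ over the Grassmannian $\Gr_d(\RR^{d+k})$, and the compatibility of the scanning maps as $N$ grows matches the structure maps of $\mto(d)$ coming from the pullback squares recalled above. Passing to the colimit over $N$ then recovers the iterated-loop-space colimit defining $\Omega^{\infty-1}\mto(d)$. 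The content of this half is that the scanning map becomes a weak equivalence in the colimit; this rests on the microflexibility of the sheaf of $d$-manifolds, which feeds a parametrized surgery argument showing that any family of $d$-manifolds in $\RR^{N}$ can be deformed, compatibly over the parameter space, into one that is locally standard near every point.

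The step I expect to be the main obstacle is this last one: proving that scanning is a weak equivalence. This is where the genuine geometric input lies --- one must establish microflexibility of the sheaf of manifolds and then run the surgery-theoretic argument that kills the relative homotopy groups of the scanning map, all while keeping tight control of the weak topologies on the infinite-dimensional submanifold and mapping spaces and of the colimits over both the Euclidean dimension and the filtered poset of cut data. The remaining steps --- the sheaf reformulation, the cofinality argument forgetting the cut data, and the h-principle forgetting the time coordinate --- are by comparison formal, modulo routine but lengthy point-set bookkeeping.
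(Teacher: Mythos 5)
This theorem is not proved in the paper at all: it is quoted verbatim from Galatius--Madsen--Tillmann--Weiss \cite{GMTW(2009)} as external input, and the only related material in the text is the informal description of $\alpha$ as a parametrized Pontryagin--Thom collapse map. So there is no in-paper proof to compare against; your proposal can only be measured against the original source.

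Against that source, your outline correctly reproduces the architecture of the known proof: pass to sheaves on smooth manifolds, model $N_\bullet\cob$ by a sheaf of $d$-manifolds with cut data, forget the cuts, forget the time coordinate, and identify the resulting space of $d$-manifolds with $\Omega^{\infty-1}\mto(d)$. Two caveats. First, you have blended two distinct arguments for the final identification: in \cite{GMTW(2009)} it is carried out via Phillips' submersion theorem and Gromov's h-principle for microflexible sheaves, whereas the literal scanning-plus-parametrized-surgery argument you describe is the later Galatius--Randal-Williams route; either works, but your sketch attributes the h-principle to the ``forget the time coordinate'' step and surgery to the scanning step, which does not match either proof cleanly and would need to be disentangled. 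Second, and more importantly, the proposal is a road map rather than a proof: every step carrying genuine content --- the cofinality argument for the cut data, the h-principle application, and above all the proof that scanning (or the section-space comparison) is a weak equivalence --- is named but not executed, and you say so yourself. As a summary of where the difficulty lies it is accurate; as a proof it is not self-contained, and for the purposes of this paper the correct move is simply to cite \cite{GMTW(2009)}, as the authors do.
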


\subsection{The cobordism category $\cobd$.} Following similar methods, Genauer generalized the results of \cite{GMTW(2009)} to cobordism categories of manifolds with corners \cite{Genauer(2008)}. We will be mainly interested in the special case of manifolds with boundary. For every $n \in \NN \cup \{\infty\}$, there is a cobordism category $\cobdn$ of smooth $d$-dimensional cobordisms between manifolds with boundary, nicely embedded in $\RR \times \RR^{d-1+n}$. The precise definition is analogous:
\begin{itemize} 
\item[(i)$'$] an object is a pair $(M, a)$ where $a \in \RR^{\delta}$ and $M$ is a smooth neat $(d-1)$-dimensional
submanifold of $\RR_+ \times \RR^{d-2 + n}$. (This model of ``discrete cuts'' is not considered in \cite{Genauer(2008)}, however the same remarks as in \cite[Remarks 2.1(ii) and 4.5]{GMTW(2009)} apply in this case as well.) 
\item[(ii)$'$] A non-identity morphism from $(M_0,a_0)$ to $(M_1, a_1)$ is a triple $(W, a_0,a_1)$ where $a_0 < a_1$ and $W$ is a smooth neat $d$-dimensional submanifold (with corners) of $[a_0,a_1] \times \RR_+ \times \RR^{d-2+n}$ satisfying (i)-(iii) as above; composition of morphisms is by taking the union of subsets.
\item[(iii)$'$] The topology is defined similarly by the orbit spaces of the actions of diffeomorphisms on spaces of neat embeddings; see \cite{Genauer(2008)} for a precise definition.
\end{itemize}
We abbreviate $\cobd := \mathcal{C}_{d, \partial, \infty} = \underset{n \to \infty}{\colim} \cobdn$. 
 
\begin{thm}[Genauer \cite{Genauer(2008)}] \label{Genauer}
There is a weak equivalence $$\tilde{\alpha}: B \cobd \stackrel{\sim}{\longrightarrow} \Omega^{\infty -1}\Sigma^{\infty}BO(d)_+.$$

\end{thm}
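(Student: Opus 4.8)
The plan is to reprove the identification directly, by adapting the scanning (Pontryagin--Thom) argument of \cite{GMTW(2009)} to manifolds with boundary, as done in \cite{Genauer(2008)}; at the end I indicate an alternative route using Theorem~\ref{GMTW} and a fiber sequence of cobordism categories, which however is no easier. The first step is to replace $B\cobd$ by a space-of-manifolds model: a space $\psi_{d,\partial}$ whose points are (possibly noncompact) neat $d$-dimensional submanifolds with boundary $W\subseteq\RR\times\RR_+\times\RR^{\infty}$ with $\partial W\subseteq\RR\times\{0\}\times\RR^{\infty}$, carrying the distinguished ``height'' coordinate $\RR$, and to prove, by a quasifibration argument of the type used in \cite{GMTW(2009)} but adapted to allow a free boundary, that $B\cobd$ is weakly equivalent to the subspace of those $W$ that are suitably supported in the height direction. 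The delicate point is to keep control of $W$ in a neighbourhood of the free boundary.

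Next one \emph{scans}: translating $W$ so that a chosen point of the ambient half-space moves to the origin and taking the germ of $W$ there, one sees, up to homotopy, either the empty set (the basepoint of the target), an affine $d$-plane (if the point is interior), or an affine half-$d$-plane with boundary on the hyperplane $\{0\}$ (if the point lies near the free boundary, where $W$ is collared). Parametrising over the Euclidean coordinates and passing to the limit yields a weak equivalence $B\cobd\simeq\Omega^{\infty-1}\mathbf{E}$ --- with the same $\Omega^{\infty-1}$, rather than $\Omega^{\infty}$, as in Theorem~\ref{GMTW}, on account of the height coordinate --- where $\mathbf{E}$ is assembled from the interior contribution, governed by $\Gr_d(\RR^{\infty})$ and its complementary bundle, and the free-boundary contribution, governed by the $(d-1)$-plane tangent to $\partial W$ together with the inward collar direction. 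One checks that $\mathbf{E}$ fits into a homotopy cofiber sequence of spectra
\[ \Sigma^{-1}\mto(d-1)\xrightarrow{\delta}\mto(d)\longrightarrow\mathbf{E}, \]
where $\delta$ is, after the Thom-spectrum twist by $-\gamma_d$, the map induced by the stabilisation $\Gr_{d-1}(\RR^{\infty})\to\Gr_d(\RR^{\infty})$, $V\mapsto V\oplus\RR$, the extra summand recording the collar direction; this is the map gluing the free-boundary part of the Thom spectrum onto the interior part $\mto(d)$.

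It then remains to identify the cofiber $\mathbf{E}$. The stabilisation $BO(d-1)\to BO(d)$ is, up to homotopy, the sphere bundle projection $\pi\colon S(\gamma_d)\to BO(d)$: indeed $S(\gamma_d)=EO(d)/O(d-1)\simeq BO(d-1)$, and $\pi^{*}\gamma_d\cong\epsilon^{1}\oplus\gamma_{d-1}$ (the line through the chosen unit vector plus its orthogonal complement in the fibre). Applying the Thom-space functor to the cofiber sequence $S(\gamma_d)_+\to BO(d)_+\to\Th(\gamma_d)$ of the pair $(D(\gamma_d),S(\gamma_d))$ and twisting by $-\gamma_d$ produces a cofiber sequence
\[ \Sigma^{-1}\mto(d-1)\longrightarrow\mto(d)\longrightarrow\Sigma^{\infty}BO(d)_+ \]
(using $\Th(-\pi^{*}\gamma_d)=\Th(-\epsilon^{1}-\gamma_{d-1})=\Sigma^{-1}\mto(d-1)$) whose first map is again $\delta$. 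Comparing with the cofiber sequence of the previous paragraph gives $\mathbf{E}\simeq\Sigma^{\infty}BO(d)_+$, hence $B\cobd\simeq\Omega^{\infty-1}\Sigma^{\infty}BO(d)_+$.

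The main obstacle is the first step: constructing the space-of-manifolds model and the quasifibration statement in the presence of a free boundary, and checking that scanning is a weak equivalence while controlling germs near the free boundary; the identification of the scanning spectrum with $\mathrm{cofib}(\delta)$ is then the one genuinely new computational input beyond \cite{GMTW(2009)}. As an alternative, the same conclusion can be packaged as a homotopy fiber sequence $B\cob\to B\cobd\xrightarrow{B\partial}B\mathcal{C}_{d-1}$ arising from the inclusion $\cob\hookrightarrow\cobd$ and the ``free boundary'' functor $\partial\colon\cobd\to\mathcal{C}_{d-1}$; granting this --- which one would attack via Quillen's Theorem~B together with a collaring argument identifying the homotopy fibre of $B\partial$ with $B\cob$ --- Theorem~\ref{GMTW} applied to the outer terms identifies the connecting map with $\delta$ (up to suspension), and the same cofiber computation gives the result. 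Proving that fiber sequence directly does not, however, seem to be any simpler than the scanning argument.
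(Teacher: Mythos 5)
This is a statement the paper quotes from Genauer without proof, and your outline reproduces the strategy of the cited proof: adapt the GMTW space-of-manifolds/scanning argument to neat submanifolds of a half-space, identify the resulting spectrum as the cofiber of $\Sigma^{-1}\mto(d-1)\to\mto(d)$, and compute that cofiber to be $\Sigma^{\infty}BO(d)_+$ via the sphere bundle $S(\gamma_d)\simeq BO(d-1)$ --- which is exactly \cite[Proposition 3.1]{GMTW(2009)}, the identification the paper itself invokes in Section 2. Your cofiber computation is correct, and the technical points you flag as the main obstacle (the quasifibration and germ-control arguments near the free boundary) are precisely the content supplied by \cite{Genauer(2008)}.
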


Both weak equivalences are obtained as parametrized versions of the Pontryagin-Thom collapse map. We recall first 
the description of this collapse map in the case of a single compact, possibly with boundary, smooth $d$-manifold $M$ neatly embedded in 
$(0, 1) \times \RR_+ \times \RR^{d-2+n}$. This can be regarded as a(n) (endo)morphism of $\cobd$, essentially from the 
empty manifold to itself, and therefore it defines a loop in $B \cobd$. (To be precise, one should think of the empty manifold 
situated, say, inside $\{0\} \times \RR^{\infty}$ and $\{1\} \times \RR^{\infty}$ together with the canonical path in $B \cobd$ 
that connects these two points through the empty cobordism in $[0,1] \times \RR^{\infty}$.) 
Hence the image of this loop under the map $\Omega(\tilde\alpha)$ is a loop in $\Omega^{\infty -1} \Sigma^{\infty} BO(d)_+$. This can 
be roughly described as follows: consider the Pontryagin-Thom collapse map
\[ (S^{d-1+n} \wedge (\RR_+ \cup \{\infty\}), S^{d-1 + n} \times \{0\}) \to (\mathrm{Th}(\nu_M), \mathrm{Th}(\nu_{\partial M}))
\]
and the classifying map of the normal bundle 
\[ (\mathrm{Th}(\nu_M), \mathrm{Th}(\nu_{\partial M})) \to (\mto(d)_{d + n}, \mto(d-1)_{d -1 + n}). \]
The cofiber of the inclusion of spectra $\Sigma^{-1} \mto(d-1) \hookrightarrow \mto(d)$ is equivalent to the spectrum $\Sigma^{\infty} (BO(d)_+)$ \cite[Proposition 3.1]{GMTW(2009)}. So the composite map of pairs induces a 
stable map on cofibers,
\[ \Sigma^{\infty} S^0 \to \Sigma^{\infty} (BO(d)_+) \]
which essentially defines the image of $\tilde{\alpha}$ at the embedded manifold $M$. On the other hand, if $\partial M = 
\varnothing$, then the composite map is a loop in $\Omega^{\infty -1} \mto(d)$, 
\[ S^{d + n} \to \mto(d)_{d + N} \]
which essentially defines the image of $\alpha$ at the embedded closed manifold $M$. (This is not a precise definition because it depends
on various choices which are not canonical in $M \subseteq (0,1) \times \RR_+ \times \RR^{d-2+n}$, however, they are essentially unique 
in a homotopical sense.) 

More generally, in the parametrized case, there is an inclusion map 
$$i_M: B_{\infty}(M) \hookrightarrow \cobd((\varnothing,0),(\varnothing, 1)) \to  \Omega_{\varnothing} B \cobd$$ 
and the definition above of $\tilde{\alpha}$ at a point of $B_{\infty}(M)$ extends similarly to $B_{\infty}(M)$. For every $n \in \mathbb{N}$,
consider the following $M$-bundle together with its natural fiberwise neat embedding,
\[
\xymatrix{
\Emb(M, (0,1) \times \RR_+ \times \RR^{d-2 + n}) \times_{\Diff(M)} M \ar@{^{(}->}[r] \ar[d]^{\pi} & B_n(M) \times (0,1) \times \RR_+ \times \RR^{d-2 + n} \ar[dl] \\
B_n(M) 
}
\]
Let $\nu^{\pi}_M$ denote the fiberwise normal bundle of the embedding and $\nu^{\partial \pi}_{\partial M}$ the corresponding 
normal bundle associated to the $\partial M$-subbundle. The Pontryagin-Thom construction produces a collapse map
\[ 
(S^{d-1+n} \wedge (\RR_+ \cup \{\infty\}), S^{d-1 + n} \times \{0\}) \wedge B_n(M)_+  \to (\mathrm{Th}(\nu^{\pi}_M), \mathrm{Th}(\nu^{\partial \pi}_{\partial M}))
\]
and the classifying map of the normal bundle is a map 
\[ (\mathrm{Th}(\nu^{\pi}_M), \mathrm{Th}(\nu^{\partial \pi}_{\partial M})) \to (\mto(d)_{d + n}, \mto(d-1)_{d -1 + n}). \]
The composite map of pairs induces a stable map on cofibers,
\[ \Sigma^{\infty} (B_n(M)_+) \to \Sigma^{\infty} (BO(d)_+). \]
Letting $n \to \infty$, we obtain a map 
$$B_{\infty}(M) \to \Omega^{\infty} \Sigma^{\infty} BO(d)_+$$
which is up to homotopy the restriction of $\Omega(\tilde{\alpha})$ along the map $i_M$. Similarly, if $\partial M = \varnothing$, then we 
have the composite map 
$$\Sigma^{d + n} (B_n(M)_+) \to \mathrm{Th}(\nu^{\pi}_M) \to \mto(d)_{d + n}$$
and letting $n \to \infty$, we obtain a map $$B_{\infty}(M) \to \Omega^{\infty} \mto(d)$$
which is up to homotopy the restriction of $\Omega(\alpha)$ along the map $B_{\infty}(M) \to \Omega B \cob$. 

Note that there is an inclusion functor of cobordism categories $\cob \hookrightarrow \cobd$. The induced map on (the loop spaces of) the classifying spaces can be identified with the map of spectra $$\mto (d) \to \Sigma^{\infty} (BO(d)_+)$$
defined by the canonical inclusion of Thom spaces $\Th(\gamma^{\perp}_{d,k}) \hookrightarrow \Th(\gamma^{\perp}_{d,k} \oplus \gamma_{d,k}) \cong S^{d+k} \wedge BO(d)_+$. We refer the reader to \cite[Section 6]{Genauer(2008)} for more details.

\section{Bivariant $A$-theory}

Bivariant $A$-theory was defined by Bruce Williams \cite{Williams(2000)}. A less general ``untwisted'' version can be discovered in unpublished work of Waldhausen.  A variation of the latter was also considered by B\"okstedt and Madsen \cite{Boekstedt-Madsen(2011)}.

The purpose of this section is to review and, for technical convenience, slightly modify Williams's definition of  bivariant $A$-theory. This associates to a fibration $p\colon E\to B$ a \emph{bivariant $A$-theory spectrum} $A(p)$ that has the following properties:
\begin{itemize}
\item[(a)] If $B$ is the one-point space, then $A(p) = A(E)$.
\item[(b)] For every fibration $q\colon V \to B$ and fiberwise map $f\colon E\to V$ over $B$, there is a natural push-forward map $f_*\colon A(p)\to A(q)$. Moreover, push-forward maps are homotopy invariant, i.e. if $f$ is a
homotopy equivalence, then so is $f_*$.
\item[(c)] For every pullback square
\[\xymatrix{
E \times_B B'  \ar[rr] \ar[d]^{p'} && E \ar[d]^p\\
B' \ar[rr]^g && B
}\]
there is a natural pull-back map $g^*\colon A(p)\to A(p')$. Moreover, pull-back maps are homotopy invariant, i.e. if $g: B' \to B$ is a homotopy equivalence, then so is $g^*$.
\item[(d)]  Push-forward maps commute with pull-back maps, i.e. given maps $q$, $f$ and $g$ as above, the following diagram commutes
\[\xymatrix{
A(p) \ar[rr]^{f_*} \ar[d]^{g^*} && A(q) \ar[d]^{g^*} \\
A(p') \ar[rr]^{f'_*} && A(q')
}\]
where $q'$ is the pullback of $q$ along $g$ and $f' \colon E \times_B B' \to V \times_B B'$ is the map induced by $f$.
\item[(e)] For every composable pair of fibrations $E \stackrel{q}{\to} V \stackrel{q'}{\to} B$ where $p = q' \circ q$, there is a
product map $$A(q) \wedge A(q') \to A(p)$$ 
which is natural up to canonical homotopy.
\end{itemize}

\subsection{Definition of bivariant $A$-theory.} The space $A(p)$ is the $K$-theory of a Waldhausen category of retractive spaces over $E$ that are suitably related to 
the fibration $p$. As usual, we assume that all spaces are compactly generated and Hausdorff. For technical reasons,
we also make the following assumption \emph{throughout this section}.

\

\noindent \textbf{Assumption.} The base space $B$ of the fibration $p: E \to B$ has the homotopy type of a CW complex. (But see 
also Remark \ref{CW-condition}.)

\

The category $\calR(E)$ of retractive spaces over $E$ consists of all diagrams of spaces
\[E \overset{i}{\rightarrowtail} X \xrightarrow{r} E\]
where $r\circ i=\id_E$ and $i$ is a cofibration.  A morphism of retractive spaces is a map over and under $E$. 
The category $\calR(E)$ becomes a Waldhausen category if we define cofibrations (resp.~weak equivalences) to be
those morphisms whose underlying map of spaces is a cofibration (resp. homotopy equivalence). Let $\Rfd(E)\subset \calR(E)$ 
be the full subcategory of all objects $(X,i,r)$
which are homotopy finite, i.e.~which are weakly equivalent, in $\calR(E)$, 
to an object $(X', i', r')$ such that $(X', i'(E))$ is a finite relative CW-complex. This is a Waldhausen subcategory
of $\calR(E)$ whose $K$-theory, denoted by $A(E)$, is the algebraic $K$-theory of the space $E$ 
\cite{Waldhausen(1985)}. 

For the definition of the bivariant $A$-theory of $p$, we consider those retractive spaces 
over $E$ that define families of homotopy finite retractive spaces over the fibers of $p$, 
parametrized by the points of $B$.

\begin{defn} \label{bivariant category}
Let $p\colon E\to B$ be a fibration. The category $\Rfd(p)\subset \calR(E)$ is the full subcategory of 
all objects $E \overset{i}{\rightarrowtail} X \xrightarrow{r} E$ such that:
\begin{enumerate}
\item the composite $p \circ r$ is a fibration, and
\item for each $b\in B$, the space $(p \circ r)\inv(b)$ is homotopy finite as an object of $\calR(p\inv(b))$ 
(with the obvious structure maps).
\end{enumerate}
\end{defn} 

From our general assumption on $B$, it follows that for every object $(X, i,r)$ of $\Rfd(p)$, the pair 
$(X,i(E))$ is homotopy equivalent to a relative CW-complex. (This is a special case of Lemma \ref{CW-homotopy-type}.) 
We define a cofibration, resp.~weak equivalence, in $\Rfd(p)$ to be a morphism which is a cofibration, 
resp.~weak equivalence, in $\calR(E)$. 

\begin{prop}
The category $\Rfd(p)$ is a Waldhausen subcategory of $\calR(E)$. Moreover, it satisfies the ``2-out-of-3" axiom (i.e. it is saturated in the terminology of \cite{Waldhausen(1985)}) 
and admits functorial factorizations of morphisms into a cofibration followed by a weak equivalence.
\end{prop}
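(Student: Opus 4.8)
I would derive the statement from the analogous properties of $\calR(E)$ together with two standard closure properties of Hurewicz fibrations. Since $\Rfd(p)$ is a \emph{full} subcategory of the Waldhausen category $\calR(E)$, it inherits cofibrations and weak equivalences, and to see that it is a Waldhausen subcategory it suffices to check that it contains the zero object $(E,\id_E,\id_E)$ and is closed under pushouts along cofibrations — that is, if $Y \leftarrowtail X \rightarrowtail Z$ is a diagram in $\Rfd(p)$ with $X\rightarrowtail Z$ a cofibration, then the pushout $W := Y\cup_X Z$, formed in $\calR(E)$ so that $Y\rightarrowtail W$ is again a cofibration, lies in $\Rfd(p)$. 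The zero object clearly belongs to $\Rfd(p)$, since there $pr = p$ is a fibration and each fiber $p\inv(b)$ is the (homotopy finite) zero object of $\calR(p\inv(b))$. Saturation of $\Rfd(p)$ is then immediate from saturation of $\calR(E)$, since the weak equivalences of $\Rfd(p)$ are exactly the weak equivalences of $\calR(E)$ between its objects.

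For the pushout closure I would invoke two facts about Hurewicz fibrations, citing the standard literature (Str{\o}m's theorem, or the fiberwise version of the Str{\o}m model structure on spaces over $B$): \textbf{(i)} if $X\rightarrowtail Z$ is a closed cofibration over $B$, $X\to Y$ is a map over $B$, and $X\to B$, $Y\to B$, $Z\to B$ are Hurewicz fibrations, then $Y\cup_X Z\to B$ is a Hurewicz fibration; and \textbf{(ii)} a closed cofibration over $B$ between Hurewicz fibrations over $B$ is a fiberwise closed cofibration, so that, in the situation of (i), the fiber of $Y\cup_X Z$ over any $b\in B$ is the corresponding pushout $Y_b\cup_{X_b}Z_b$ of the fibers, formed along the closed cofibration $X_b\rightarrowtail Z_b$. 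Granting these, if $X,Y,Z\in\Rfd(p)$, then the composites $p\circ r_X$, $p\circ r_Y$, $p\circ r_Z$ to $B$ are fibrations and $X\rightarrowtail Z$ is a cofibration over $B$ (since it commutes with the retractions, hence with the maps to $B$). Hence by (i) the composite $W\to B$ is a fibration, and by (ii) its fiber over $b$ is a pushout in $\calR(p\inv(b))$, along a cofibration, of the fibers of $X$, $Y$, $Z$ over $b$, each of which is homotopy finite by hypothesis; this pushout is therefore homotopy finite, because $\Rfd(p\inv(b))$ is a Waldhausen subcategory of $\calR(p\inv(b))$. Thus $W\in\Rfd(p)$. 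Taking $Z = E$ shows in particular that $\Rfd(p)$ is closed under cokernels of cofibrations, so that its cofibrations are precisely those of $\calR(E)$ between objects of $\Rfd(p)$.

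Finally, for functorial factorizations I would use the mapping cylinder $\cyl$ of $\calR(E)$: for a morphism $f\colon X\to Y$ in $\Rfd(p)$ it yields a functorial factorization $X\rightarrowtail\cyl(f)\xrightarrow{\ \sim\ }Y$ in $\calR(E)$, and it remains only to check $\cyl(f)\in\Rfd(p)$. But $\cyl(f)$ is the pushout of $Y$ and $X\times[0,1]$ along the closed cofibration $X\times\{0\}\hookrightarrow X\times[0,1]$, and $X\times[0,1]\to B$ is a fibration because $X\to B$ is; so (i) shows that $\cyl(f)\to B$ is a fibration, and (ii) identifies its fiber over $b$ with the mapping cylinder in $\calR(p\inv(b))$ of the map induced by $f$ on fibers over $b$, which is homotopy finite since $\calR(p\inv(b))$ admits mapping-cylinder factorizations preserving homotopy finiteness. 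Hence $\cyl(f)\in\Rfd(p)$ and the factorization takes place within $\Rfd(p)$.

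I expect the only genuinely non-formal input to be fact (i) — the pushout/gluing lemma for Hurewicz fibrations — together with the compatibility with fibers in (ii); everything else is bookkeeping with the Waldhausen structure of $\calR(E)$ and of the fiberwise categories $\calR(p\inv(b))$.
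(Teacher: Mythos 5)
Your proposal is correct and follows essentially the same route as the paper: reduce to closure of $\Rfd(p)$ under pushouts along cofibrations, use the gluing lemma for fibrations over $B$ (the paper cites Kieboom for exactly your fact (i)) together with the identification of the fiber of the pushout with the pushout of the fibers, and then obtain factorizations from the fiberwise mapping cylinder. The only cosmetic difference is that the paper writes out the mapping-cylinder pushouts explicitly rather than quoting a fiberwise Str{\o}m-type statement.
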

\begin{proof}
Since $\Rfd(p)\subset\calR(E)$ is a full subcategory which contains the zero object, it suffices 
to show that $\Rfd(p)$ is closed under pushouts along a cofibration in $\calR(E)$.  Let
\[\xymatrix{
X_0 \ar[rr] \ar@{>->}[d] && X_1 \ar@{>->}[d]\\
X_2 \ar[rr] && X
}\]
be a pushout diagram of retractive spaces over $E$, such that $p\circ r_i\colon X_i\to B$ are fibrations, for $i=0,1,2$, whose 
fibers are homotopy finite relative to the fibers of $p$. Then the induced map $p \circ r \colon X\to B$ is a fibration 
(see \cite[p. 383]{Kieboom}), and there is a pushout diagram
\[\xymatrix{
(p \circ r_0)\inv(b) \ar[rr] \ar@{>->}[d] && (p \circ r_1)\inv(b) \ar@{>->}[d]\\
(p \circ r_2)\inv(b) \ar[rr] && (p \circ r)\inv(b)
}\]
which shows that $(p \circ r)\inv(b)$ defines an object of $\Rfd(p\inv (b))$, since this category is closed under taking 
such pushouts. The class of homotopy equivalences clearly satisfies the ``2-out-of-3" axiom, so $\Rfd(p)$ is saturated. 
It remains to show the existence of factorizations of morphisms. These will be obtained by 
the mapping cylinder construction as usual. Let $f: (X,i_X, r_X) \to (Y, i_Y, r_Y)$ be a morphism in $\Rfd(p)$. Consider 
$$(X \times I, j_0 \circ i_X, r_X \circ \pi_X)$$
as an object of $\Rfd(E)$, where $j_0(x) = (x,0)$ and $\pi_X(x,t) = x$. A cylinder object $\cyl_E(X)$ for $(X, i_X, r_X)$ is defined by the pushout square in $\Rfd(p)$: 
\[
\xymatrix{
E \times I \ar[rr]^{\mathrm{proj}} \ar@{>->}[d]^{i_X \times \id}  && E \ar@{>->}[d] \\
X \times I \ar[rr]^q && \cyl_E(X).
}
\]
Then the mapping cylinder $M_f$ of the map $f: (X,i_X, r_X) \to (Y, i_Y, r_Y)$ is defined by the pushout in $\Rfd(p)$:
\[
\xymatrix{
X \ar[rr]^f \ar@{>->}[d]^{q \circ  j_0} && Y \ar@{>->}[d] \\
\cyl_E(X) \ar[rr]^u && M_f
}
\]
and is denoted by $(M_f, i', r')$. Note that the fiber of $p \circ r': M_f \to B$ at $b \in B$ fits in the pushout diagram 
 \[\xymatrix{
(p \circ r_X)\inv(b) \ar[rr] \ar@{>->}[d]^{q \circ j_0} && (p \circ r_Y)\inv(b) \ar@{>->}[d]\\
\cyl_{p \inv(b)}((p \circ r_X)\inv(b)) \ar[rr] && (p \circ r')\inv(b)
}\]
By the universal property of pushouts, there is a canonical map $(M_f, i', r') \to (Y, i_Y, r_Y)$ which is also a homotopy equivalence. Then the standard factorization of the map $f: (X, i_X, r_X) \to (Y, i_Y, r_Y)$ as 
\[
\xymatrix{
(X, i_X, r_X) \ar@{>->}[rr]^{u \circ q \circ j_1} && (M_f, i', r') \ar[r] & (Y, i_Y, r_Y)
}
\]
defines functorial factorizations in $\Rfd(p)$ with the required properties.
\end{proof}

\begin{rem} If $p\colon X\times B\to B$ is the trivial fibration, 
then the Waldhausen category $\Rfd(p)$ is closely related to the bivariant category denoted by $W(X,B)$ in 
\cite{Boekstedt-Madsen(2011)}. Later on (subsection \ref{scanning_map}), this notation will be used to denote 
the (classifying space of the) weak equivalences of $\Rfd(p)$. From now on, when we discuss the homotopy type of a small category, we will often omit the classifying space functor ``$B$'', or simply replace it by ``$\vert \cdot \vert$'', in order to simplify the notation.  
\end{rem} 

\begin{defn}
The bivariant $A$-theory of $p : E \to B$ is defined to be the space 
\begin{center}
$A(p) :=K(\Rfd(p))=\Omega\vert wS_\bullet\Rfd(p)\vert$.
\end{center}
\end{defn}

Most of this section is devoted to the proof of the properties of bivariant $A$-theory which were stated at the beginning. First, notice that if $B$ is a point, then the categories $\Rfd(p)$ and $\Rfd(E)$ are the same, so we have $A(p)=A(E)$ in this case. This shows property (a).

\subsection{Functoriality.} We now proceed to define the push-forward and pull-back maps. Let $q\colon V \to B$ be another fibration and $f\colon E\to V$ a fiberwise map, i.e.~$q\circ f=p$. The push-forward along $f$ defines an exact functor of Waldhausen categories
\[f_*\colon \calR(E)\to\calR(V), X\mapsto X\cup_E V.\]
We claim that this actually restricts to an exact functor 
\[f_*\colon \Rfd(p)\to\Rfd(q)\]
between the corresponding Waldhausen subcategories. Indeed we have already remarked that if $X$, $E$, and $V$ are fibered over $B$, then so is also the adjunction space $X\cup_E V$. Moreover, the fiber of $X \cup_E V$ over a 
point $b\in B$ is the adjunction space $X_b\cup_{E_b} V_b$ and it is homotopy finite relative $V_b$ whenever $X_b$ is homotopy finite relative
$E_b$. Hence we obtain a map in $K$-theory,
\[f_*\colon A(p)\to A(q).\]

To define the pull-back maps, consider a pullback square 
\begin{equation}\label{eq:pull_back_square}
\xymatrix{
E' \ar[d]^{p'} \ar[rr] && E\ar[d]^p\\
B' \ar[rr]^g && B
}
\end{equation}
There is a functor
\[g^*\colon \Rfd(p)\to\Rfd(p')\]
defined by sending a retractive space $X$ over $E$ to the pullback $X':=X\times_B B'$. This defines a retractive space over $E'$ 
and a fibration over $B'$. Also for each $b'\in B'$ the fiber $X'_{b'}\cong X_{g(b)}$ is homotopy finite as a retractive space over $E'_{b'}\cong E_{g(b)}$. This shows that the functor is well-defined. Moreover, it preserves pushouts, cofibrations 
(see \cite[p. 381]{Kieboom})
and homotopy equivalences, so it defines an exact functor of Waldhausen categories. Hence we obtain a map in $K$-theory,
\[g^*\colon A(p)\to A(p').\]

\begin{rem}[Naturality]
In order to obtain strict naturality of these maps (and also to ensure that the size of the Waldhausen categories is small) we have 
to make certain additional assumptions. Fix, once and for all, a set $\calU$ of cardinality $2^{\vert\RR\vert}$. In the 
definition of an object $(X,i,r)$ in $\Rfd(p)$, where $p\colon E\to B$, we additionally require that $X$ is a set-theoretical 
subset of $E\amalg (B\times \calU)$, such that 
\begin{enumerate}
\item the composite
\[E\xrightarrow{i} X \hookrightarrow E\amalg (B\times \calU)\]
is the inclusion of $E$ into the disjoint union, and
\item the following diagram is commutative:
\[\xymatrix{
X \ar@{^{(}->}[rr] \ar[rd]_{p\circ r}
   && E\amalg (B\times \calU) \ar[ld]^{p\amalg \operatorname{proj}}\\  
& B
}\]
\end{enumerate}
For a map $f\colon E\to V$ over $B$, the adjunction space $X\cup_E V$ can be regarded as a subset of 
$V \amalg (B\times \calU)$ satisfying conditions (i) and (ii). On the other hand, suppose that we are given a pullback square 
\eqref{eq:pull_back_square}, then the pullback $X\times_B B'$ can be regarded as a subset of 
$E'\amalg (B'\times \calU)$. Using these conventions, both push-forward and pull-back maps are \emph{strictly functorial} and 
\emph{commute with each other}. This shows parts of properties (b) and (c) and property (d).
\end{rem}

\subsection{Homotopy invariance.} The following propositions show the homotopy invariance of bivariant $A$-theory.

\begin{prop}\label{lem:homotopy_invariance_of_bivariant_stuff2} Let $p: E \to B$ and $q: V \to B$ be fibrations and $f: E \to V$ a fiberwise map over $B$. If $f$ is a homotopy equivalence, then so are the induced push-forward maps 
$wS_nf_* : wS_n \Rfd(p) \to wS_n \Rfd(q)$ for all $n \geq 0$. In particular, the push-forward map $f_*: A(p) \to A(q)$ is also a homotopy equivalence.
\end{prop}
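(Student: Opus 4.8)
The plan is to reduce the statement to a known homotopy-invariance result for Waldhausen $K$-theory by constructing a strict homotopy inverse to $f_*$ at the level of Waldhausen categories, or failing that, to exhibit $f_*$ as an equivalence on each $wS_n$ level directly. First I would recall Waldhausen's theorem that an exact functor of Waldhausen categories which is an equivalence of categories after passing to homotopy categories induces a homotopy equivalence on $K$-theory; in fact the cleaner route here is the level-wise statement, since the $S_\bullet$-construction is built degreewise out of the category of filtered objects, and an exact functor inducing equivalences $wS_n$ for all $n$ induces an equivalence on $K$-theory after looping. So it suffices to treat $n = 1$, i.e.\ to show that $w\Rfd(p) \to w\Rfd(q)$ is a homotopy equivalence of nerves; the general $n$ follows formally because $S_n\Rfd(p)$ is again a category of retractive-type objects over $E$ fibered over $B$ (a category of filtered retractive spaces), to which the same argument applies verbatim.

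Next I would produce a homotopy inverse to $f_*$ on $w\Rfd(p)$. Choose a homotopy inverse $g \colon V \to E$ to $f$ over $B$ together with a fiberwise homotopy $f g \simeq \id_V$; since the collection of such choices is contractible, strictness is not an issue up to the usual $\calU$-size conventions. Pushing forward along $g$ gives $g_* \colon \Rfd(q) \to \Rfd(p)$, and the fiberwise homotopy yields a natural weak equivalence $g_* f_* \Rightarrow \id$ and $f_* g_* \Rightarrow \id$: concretely, for $X$ in $\Rfd(p)$ the space $g_* f_* X = (X \cup_E V) \cup_V E$ receives a natural map to $X$, and on fibers this is the map $X_b \cup_{E_b} V_b \cup_{V_b} E_b \to X_b$ induced by the nullhomotopy, which is a homotopy equivalence of retractive spaces over $E_b$ because $E_b \to V_b$ is one. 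Natural weak equivalences of exact functors induce homotopies between the induced maps on $w(-)$, so $g_*$ is a two-sided homotopy inverse to $f_*$ on $w\Rfd(p)$, hence on every $w S_n \Rfd(p)$, hence on $A(p)$.

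The main obstacle I anticipate is the verification that the natural transformation $g_* f_* \Rightarrow \id$ (and its companion) is genuinely a transformation \emph{inside} $\Rfd(p)$ with the right properties — i.e.\ that every intermediate object $X \cup_E V$, and more delicately the ``double pushout'' together with the interpolating mapping-cylinder-type objects needed to realize the homotopy $fg \simeq \id$ fiberwise, actually lies in $\Rfd(p)$. This requires checking two things at each stage: that the total space remains fibered over $B$ (which is where one invokes the gluing lemma for fibrations, as in \cite{Kieboom}, already used in the Proposition above), and that the fiber over each $b \in B$ stays homotopy finite relative to $p\inv(b)$, which follows because homotopy finiteness is a homotopy-invariant property closed under the pushouts in question within $\Rfd(p\inv(b))$. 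A minor additional point is bookkeeping with the $\calU$-subset conventions so that $g_*$ and the natural transformations are strictly defined; this is routine given the setup in the Naturality remark. Once these membership and fibrancy checks are in place, the argument is formal.
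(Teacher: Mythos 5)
Your route is genuinely different from the paper's. The paper first treats the case where $f$ is a trivial cofibration by checking the approximation properties (AP1) and (AP2) for $f_*\colon \Rfd(p)\to\Rfd(q)$ and invoking Cisinski's generalized approximation theorem (which yields the equivalence on every $wS_n$ at once), and then reduces the general case to this one by factoring $f$ over $B$ through the mapping cylinder $(E\times I)\cup_E V$. You instead construct an explicit homotopy inverse $g_*$. This is a legitimate alternative, but note that a fiberwise homotopy inverse $g$ with fiberwise homotopies $fg\simeq_B \id_V$ and $gf\simeq_B \id_E$ is not given: the hypothesis is only that $f$ is a homotopy equivalence of total spaces, so you must invoke Dold's theorem (a map over $B$ between fibrations which is a homotopy equivalence is a fiber homotopy equivalence) to produce $g$. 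That is exactly the external input the paper's argument avoids; it should be stated explicitly.

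The more serious problem is the asserted ``natural map $g_*f_*X=(X\cup_E V)\cup_V E\to X$''. This double pushout is $X\cup_{E,\,gf}E$, and a morphism from it to $X$ in $\Rfd(p)$, i.e.\ over and under $E$, would force $gf=\id_E$ strictly; the fiberwise nullhomotopy does not produce a single natural transformation, so as written this step fails. What does work --- and what you only flag as an ``anticipated obstacle'' rather than carry out --- is a zig-zag: for a fiberwise homotopy $H\colon E\times I\to E$ from $gf$ to $\id_E$, the endpoint inclusions $j_0,j_1\colon E\to E\times I$ admit natural weak equivalences $(j_0)_*X\to X\times I\leftarrow (j_1)_*X$ in $\Rfd(E\times I\to B)$, and applying the exact functor $H_*$ (using $H_*(j_\epsilon)_*=(Hj_\epsilon)_*$) gives natural weak equivalences of exact endofunctors $(gf)_*\Rightarrow H_*(-\times I)\Leftarrow \id$ of $\Rfd(p)$; these induce homotopies on every $\vert wS_n(-)\vert$, and likewise for $fg$. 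Your membership checks (Kieboom for fibrancy of the glued total spaces, fiberwise homotopy finiteness) then apply to each object of the zig-zag. With these two repairs the argument is complete; the preliminary reduction to $n=1$ is unnecessary, since the natural weak equivalences give the homotopies degreewise in the $S_\bullet$-direction.
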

\begin{proof}
We show this first in the case where $f \colon E \stackrel{\simeq}{\hookrightarrow} V$ is a trivial cofibration by 
applying Cisinski's generalized approximation theorem \cite{Cisinski(2010)} (cf.~\cite[Theorem 1.6.7]{Waldhausen(1985)}). 
So it suffices to check that the exact functor $f_*: \Rfd(p) \to \Rfd(q)$ has the approximation properties (AP1) and 
(AP2) of \cite[p. 512]{Cisinski(2010)}. Indeed the approximation theorem of \cite[Proposition 2.14]{Cisinski(2010)} shows 
then that $wS_nf_*$ is a homotopy equivalence for all $n \geq 0$ (see \cite[Proposition 2.3, Lemme 2.13]{Cisinski(2010)}). 

Since $f$ is a homotopy equivalence, then clearly $g: X \to Y$ (over $E$) is a homotopy equivalence if and only if
$f_*(g): X \cup_E V \to Y \cup_E V$ is a homotopy equivalence, so (AP1) holds. Let $(X, i_X, r_X)$ be an object of $\Rfd(p)$, $(Y, i_Y, r_Y)$ an object of $\Rfd(q)$ and $$u: f_*(X,i_X, r_X) = (X \cup_E V, i'_X, r'_X) \to (Y, i_Y, r_Y)$$ a map in $\Rfd(q)$. We factorize the retraction map $r_Y$ into a trivial cofibration and a fibration 
$$Y \stackrel{j}{\rightarrowtail} Y' \stackrel{q}{\rightarrow} V.$$
Clearly $(Y', i_{Y'} = j \circ i_Y, q)$ is an object of $\Rfd(q)$ and its restriction $(Y'_{|E}, i_{Y'}, q)$ over $E$ is an object of $\Rfd(p)$. There is an adjoint map 
$$v: (X, i_X, r_X) \to (Y'_{|E}, i_{Y'}, q)$$ 
in $\Rfd(p)$. Then we have a diagram in $\Rfd(q)$ as follows 
\[
\xymatrix{
(X \cup_E V, i'_X, r'_X) \ar[d]^{f_*(v)} \ar[rr]^u && (Y, i_Y, r_Y) \ar[d]^{j}_{\simeq} \\
(Y'_{|E} \cup_E V, i_{Y'}, q) \ar@{>->}[rr]^{\simeq} && (Y', i_{Y'}, q)  
}
\]
and therefore (AP2) also holds. This concludes the proof in the case where $f$ is a trivial cofibration. The general case of an 
arbitrary homotopy equivalence $f: E \stackrel{\simeq}{\to} V$ follows from this by factorizing $f$ in the standard way as 
\[
\xymatrix{
E \ar@{>->}[rr]^(.4){\simeq} \ar[rrd]_{p} && (E \times I) \cup_E V \ar[d] \ar@<1ex>[rr] && V \ar@{>->}[ll]^(.4){\simeq} \ar[lld]^q \\
&& B &&
}
\]
to reduce this general case to the case of trivial cofibrations.
\end{proof}

\begin{cor} \label{htpy-inv: fiberwise}
Let $p: E \to B$ and $q: V \to B$ be fibrations and $f,g: E \to V$ two fiberwise maps over $B$. If $f \simeq_B g$ are fiberwise
 homotopic over $B$, then $wS_n f_* \simeq wS_n g_*: wS_n \Rfd(p) \to wS_n \Rfd(q)$ are homotopic for all $n \geq 0$. Moreover, $f_* \simeq g_* : A(p) \to A(q)$ are also homotopic.
\end{cor}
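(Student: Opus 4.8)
The plan is to factor both push-forward functors through the cylinder on $E$ and reduce the statement to the homotopy invariance already established in Proposition~\ref{lem:homotopy_invariance_of_bivariant_stuff2}. Write $\mathrm{pr}\colon E\times I\to E$ for the projection and $j_0,j_1\colon E\to E\times I$, $e\mapsto(e,t)$, for the two end inclusions, and regard $E\times I$ as a space over $B$ via $p\circ\mathrm{pr}$; since $\mathrm{pr}$ is a fibration and $p$ is a fibration, $p\circ\mathrm{pr}$ is again a fibration, so $\Rfd(p\circ\mathrm{pr})$ is defined, and $\mathrm{pr}$, $j_0$, $j_1$ are fiberwise maps over $B$. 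A fiberwise homotopy over $B$ from $f$ to $g$ is precisely a fiberwise map $H\colon E\times I\to V$ over $B$ with $H\circ j_0=f$ and $H\circ j_1=g$. By (strict) functoriality of the push-forward construction one then has, as exact functors of Waldhausen categories,
\[
f_*=H_*\circ (j_0)_*,\qquad g_*=H_*\circ (j_1)_*,\qquad (\mathrm{pr})_*\circ(j_0)_*=(\mathrm{pr})_*\circ(j_1)_*=\mathrm{id}.
\]

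Next I would observe that $\mathrm{pr}\colon E\times I\to E$ is a fiberwise homotopy equivalence over $B$, with fiberwise homotopy inverse $j_0$, so Proposition~\ref{lem:homotopy_invariance_of_bivariant_stuff2} applies and shows that $wS_n(\mathrm{pr})_*$ is a homotopy equivalence for every $n\geq 0$, and likewise that $(\mathrm{pr})_*\colon A(p\circ\mathrm{pr})\to A(p)$ is a homotopy equivalence. The proof is then completed by the elementary fact that if $h$ is a homotopy equivalence with homotopy inverse $\bar h$, and $a,b$ are maps with $ha=hb=\mathrm{id}$, then $a\simeq \bar h h a=\bar h=\bar h h b\simeq b$. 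Applying this with $h=wS_n(\mathrm{pr})_*$, $a=wS_n(j_0)_*$, $b=wS_n(j_1)_*$ (respectively, with the corresponding maps on $A(-)$, using the identities $(\mathrm{pr})_*(j_t)_*=\mathrm{id}$ on $A$-theory) yields $wS_n(j_0)_*\simeq wS_n(j_1)_*$ (respectively $(j_0)_*\simeq(j_1)_*$ on $A$-theory). Composing these homotopies on the left with $wS_n H_*$ (respectively $H_*$) gives $wS_nf_*\simeq wS_ng_*$ and $f_*\simeq g_*$, as desired.

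The only point that needs a little care — not a genuine obstacle — is to arrange that the factorizations $f_*=H_*\circ(j_0)_*$ and the identities $(\mathrm{pr})_*\circ(j_t)_*=\mathrm{id}$ hold strictly, so that the formal lemma applies verbatim and the homotopies may be concatenated; this is exactly the purpose of the set-theoretic conventions in the Naturality remark above. If one prefers instead to work only up to the evident canonical natural isomorphisms, the argument goes through with purely cosmetic changes by inserting the corresponding coherence homotopies.
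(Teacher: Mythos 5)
Your argument is correct and is essentially the paper's own proof: both reduce to comparing $(j_0)_*$ and $(j_1)_*$ for the two end inclusions into the cylinder $E\times I$ over $B$, both exploit the splitting by the projection, and both invoke Proposition~\ref{lem:homotopy_invariance_of_bivariant_stuff2} to conclude that two maps sharing a common (one-sided, hence two-sided) homotopy inverse are homotopic. The only cosmetic difference is that the paper applies the proposition to $j_0,j_1$ while you apply it to the projection; the underlying formal lemma is the same.
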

\begin{proof}
It suffices to prove the statement for the inclusions at the endpoints $$j_0, j_1\colon E\to E\times I$$
 regarded as fiberwise maps from $p$ to the fibration $q = p \circ \mathrm{proj} \colon E\times I \to B$. Both are split by the projection $\pi: E \times I \to E$ over $B$.  By Proposition \ref{lem:homotopy_invariance_of_bivariant_stuff2}, the push-forward maps $wS_n(j_0)_*$ and $wS_n (j_1)_*$ are homotopy equivalences with homotopy inverse given by $wS_n \pi_*$. It follows that they are homotopic. The last statement can be shown similarly.  
\end{proof}

\begin{prop}\label{lem:homotopy_invariance_of_bivariant_stuff}
Let $p: E \to B$ be a fibration and $g \colon B'\to B$ a map as in diagram \eqref{eq:pull_back_square}. If $g$ is a homotopy equivalence, then so are the induced pull-back maps $wS_ng^*: wS_n \Rfd(p) \to wS_n \Rfd(p')$ for all $n \geq 0$. In particular, the pull-back map $g^*: A(p) \to A(p')$ is also a homotopy equivalence. 
%\colon \vert w\Rfd(p) \vert \to \vert w\Rfd(p') \vert$ and $g^* \colon \vert wS_\bullet \Rfd(p) \vert  \\ \to \vert wS_\bullet \Rfd(p')\vert $. 
\end{prop}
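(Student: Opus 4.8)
The plan is to follow the pattern of the proof of Proposition~\ref{lem:homotopy_invariance_of_bivariant_stuff2}, adapted to the fact that pull-back is contravariant. The first step is to reduce to the case where $g$ is a trivial cofibration. Given an arbitrary homotopy equivalence $g\colon B'\to B$, factor it through its mapping cylinder as $B'\xrightarrow{\iota}M_g\xrightarrow{r}B$, where $\iota$ is a cofibration, $r$ is a deformation retraction, and the section $s\colon B\hookrightarrow M_g$ of $r$ (the inclusion of the target end) is itself a trivial cofibration. Since $r$ is a homotopy equivalence and $g=r\iota$, the map $\iota$ is a homotopy equivalence, hence a trivial cofibration. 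Granting the statement for trivial cofibrations, the maps $wS_n\iota^{*}$ and $wS_ns^{*}$ are homotopy equivalences for all $n$; from $wS_ns^{*}\circ wS_nr^{*}=wS_n(rs)^{*}=\id$ it then follows that $wS_nr^{*}$ is a homotopy equivalence, and therefore so is $wS_ng^{*}=wS_n\iota^{*}\circ wS_nr^{*}$ (here I use that pull-backs are strictly functorial, by the naturality conventions). The case of $g^{*}\colon A(p)\to A(p')$ follows at once.

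So suppose $g\colon B'\hookrightarrow B$ is a trivial cofibration. Choose a retraction $\rho\colon B\to B'$ and a homotopy $H\colon B\times I\to B$ from $\id_B$ to $g\rho$ which is stationary on $B'$. Using the homotopy lifting property of $p$, lift $H\circ(p\times\id_I)$, starting at $\id_E$, to a homotopy $\widetilde H\colon E\times I\to E$; then $\psi:=\widetilde H_1$ carries $E$ into $E':=p^{-1}(B')$ and covers $\rho$, so that $\phi:=(\psi,p)\colon E\to\rho^{*}E'=E'\times_{B'}B$ is a map over $B$. A fibrewise inspection --- transport along the paths $t\mapsto H_t(b)$ inside the fibration $p$ --- shows that $\phi$ is a fibrewise homotopy equivalence over $B$, whose restriction over $B'$ is $\psi|_{E'}$ and is fibrewise homotopic over $B'$ to $\id_{E'}$.

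With this geometric input, the plan is to verify that the exact functor $g^{*}\colon\Rfd(p)\to\Rfd(p')$ has the approximation properties (AP1) and (AP2) of \cite{Cisinski(2010)}. Since $\Rfd(p)$ and $\Rfd(p')$ are saturated and admit functorial factorizations (as established above), Cisinski's Approximation theorem then yields that $wS_ng^{*}$ is a homotopy equivalence for all $n\geq 0$, and hence, by the definition of bivariant $A$-theory, that $g^{*}\colon A(p)\to A(p')$ is a homotopy equivalence. Property (AP1) is immediate: since $g$ --- and therefore the inclusion of the restriction over $B'$ of any object of $\Rfd(p)$ --- is a homotopy equivalence, a morphism of $\Rfd(p)$ is a weak equivalence if and only if its restriction over $B'$ is. For (AP2), given $X\in\Rfd(p)$, an object $Y\in\Rfd(p')$ and a morphism $u\colon g^{*}X\to Y$, I would extend $Y$ to the retractive space $\rho^{*}Y$ over $\rho^{*}E'$, base-change it along $\phi$ to a retractive space over $E$, and, after a mapping-cylinder replacement restoring the fibration and homotopy-finiteness conditions, produce the desired factorization of $u$; compatibility with $u$ uses the identity $g^{*}\rho^{*}=\id$ (pull-back along $\rho g=\id_{B'}$), Corollary~\ref{htpy-inv: fiberwise}, and the identification of $\phi$ over $B'$ from the previous paragraph.

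The step I expect to be the main obstacle is (AP2). Unlike the push-forward situation of Proposition~\ref{lem:homotopy_invariance_of_bivariant_stuff2}, where $f_{*}X=X\cup_{E}V$ comes with an evident map from $X$, the ``un-restriction'' of a retractive space from over $E'$ to over $E$ is not canonical: $\rho^{*}E'$ is only fibrewise equivalent to $E$, not equal to it, and the base-changed object need not satisfy the fibration condition before a further replacement. Carrying out these replacements carefully --- most cleanly by first factoring $\phi$ over $B$ as a fibrewise trivial cofibration followed by a fibrewise fibration, in the spirit of the factorization used at the end of the proof of Proposition~\ref{lem:homotopy_invariance_of_bivariant_stuff2} --- is where the real work lies. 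A more formal alternative combines Proposition~\ref{lem:homotopy_invariance_of_bivariant_stuff2} (applied to $\phi_{*}$), property~(d) from the start of this section, and Corollary~\ref{htpy-inv: fiberwise} to reduce the claim for $g^{*}$ to the same claim for the pull-back along $g$ of the particular fibration $\rho^{*}p'$; but since that reduction still bottoms out at one genuine application of the Approximation theorem, the direct argument above seems preferable.
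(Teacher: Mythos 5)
Your reduction to the case of a trivial cofibration and your verification of (AP1) are fine, but the proof is not complete: as you yourself acknowledge, the entire content of the argument sits in (AP2), and that step is only sketched. The difficulty there is genuine, not merely bookkeeping. In the push-forward case the adjoint construction hands you the required factorization on the nose, because $f_*X=X\cup_EV$ receives a canonical map from $X$. Here, by contrast, you must \emph{extend} a retractive space $Y$ over $E'=p^{-1}(B')$ to an object of $\Rfd(p)$ whose restriction back over $B'$ is related to $Y$ by an actual morphism of $\Rfd(p')$ strictly compatible with $u$ --- not merely by a zigzag of fibrewise homotopy equivalences. Your candidate $\phi^*\rho^*Y$ restricts over $B'$ to $(\psi|_{E'})^*Y$, and $\psi|_{E'}$ is only fibrewise \emph{homotopic} to $\id_{E'}$; converting that homotopy (plus the fibrant and homotopy-finite replacements you mention) into the strict commutative square that (AP2) demands is exactly the work that is missing. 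As it stands, there is a gap.

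For comparison, the paper proves this proposition without the Approximation Theorem. It reduces, by the standard argument, to showing that the two restrictions $i_0^*,i_1^*\colon w\Rfd(p\times\id_I)\to w\Rfd(p)$ along the endpoint inclusions $i_0,i_1\colon B\to B\times I$ are homotopic. Composing with the push-forwards $(j_0)_*,(j_1)_*$ --- which are homotopic homotopy equivalences by Corollary \ref{htpy-inv: fiberwise}, itself a consequence of Proposition \ref{lem:homotopy_invariance_of_bivariant_stuff2} --- one exhibits natural weak equivalences $(j_0)_*\circ i_0^*\to\pi\leftarrow(j_1)_*\circ i_1^*$, where $\pi$ is the forgetful functor viewing a fibration over $B\times I$ as one over $B$. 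Natural transformations become homotopies after geometric realization, and the same argument applies in each degree of the $S_\bullet$-construction. This soft argument is where the contravariant case genuinely differs from the covariant one, and it is the route to take rather than forcing (AP2) for $g^*$.
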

\begin{proof}
It is enough to show that if $i_0, i_1\colon B\to B\times I$ are the inclusions at the endpoints, then the induced maps 
\[ i_0^*, i_1^* \colon  w\Rfd(p\times\id_I) \to  w\Rfd(p)\] 
are homotopic.  By Corollary \ref{htpy-inv: fiberwise}, it suffices to show that the maps 
\[(j_0)_*\circ i_0^*, (j_1)_*\circ i_1^*\colon  w\Rfd(p\times\id_I) \to  w\Rfd(q) \]
are homotopic. We recall that $j_0, j_1: E \to E \times I$ denote the inclusions at the endpoints, as fiberwise maps over $B$, and 
$q \colon E \times I \to E \stackrel{p}{\to} B$ is the composite fibration. Let
\[\pi\colon w\Rfd(p\times\id_I)\to w\Rfd(q)\]
be the forgetful functor which views a fibration over $B\times I$ as one over $B$. Then there are natural weak equivalences of functors
\[(j_0)_*\circ i_0^* \xrightarrow{\simeq} \pi\xleftarrow{\simeq} (j_1)_*\circ i_1^*\]
which give the desired homotopy after geometric realization. Applying the same argument in each degree of the $S_\bullet$-construction finishes the proof.
\end{proof}

The above statements conclude the proof of properties (b) and (c). As a consequence of the homotopy invariance, we can define a 
\emph{thick model} for $A$-theory as follows (see also \cite{Boekstedt-Madsen(2011)}). 
This model will be needed in the proofs of the main results. We abbreviate 
\[ \Rfd (X, B) :=  \Rfd \fibr{ X \times B}{B}.  \] 
The thick model for $|wS_q\Rfd(X)|$ is defined to be the geometric realization of the simplicial space 
\[
wS_q \Rfd(X,\Delta^\bullet):= \left[ [n] \mapsto \vert wS_q \Rfd(X, \Delta^n) \vert \right] 
\]
where $\Delta^n = | \Delta^n_{\bullet}|$ denotes the standard topological $n$-simplex and the simplicial operations are induced by 
the pull-back maps. The thick model for $A$-theory is defined to be the space
$$A_{\Delta}(X) : = \Omega | ([q], [n]) \mapsto wS_q \Rfd (X, \Delta^n) |$$
where 
$$([q], [n]) \mapsto wS_q \Rfd(X, \Delta^n)$$ 
is viewed as a bisimplicial space. By Proposition \ref{lem:homotopy_invariance_of_bivariant_stuff}, the 
inclusion of the 0-skeleton 
\[
wS_q \Rfd(X) \stackrel{\simeq}{\to} \left\vert w S_q \Rfd (X,\Delta^\bullet) \right\vert.
\] 
is a homotopy equivalence. Thus the bisimplicial space defining the thick model for $A$-theory is homotopically constant
in the $n$-direction. Passing to the loop spaces of the geometric realizations, we obtain a homotopy equivalence
\[ 
A(X) \stackrel{\simeq}{\longrightarrow} A_{\Delta}(X). 
\]

The proof of property (e), which will not be needed for the main results of this paper, will be discussed separately in 
appendix A. We note that, based on these properties, Fulton and MacPherson \cite{Fulton-MacPherson(1981)} presented an axiomatic 
approach to \emph{bivariant theories} and studied their connection with Riemann-Roch theorems (see also \cite{Williams(2000)}).

\begin{rem}\label{CW-condition}
The results of this section remain true without any special assumption on $B$. Our assumption is related to the choice between 
homotopy equivalences and weak homotopy equivalences. The homotopy finiteness condition of Definition \ref{bivariant category} 
does not imply in general that the objects of $\Rfd(p)$ are homotopy equivalent to relative CW-complexes. Thus, for a general 
fibration $p: E \to B$, it would be more reasonable to define $A(p)$ to be the space $A(\tilde{p})$ where 
$\tilde{p}: \tilde{E} \to \tilde{B}$ is the pullback of $p$ by a functorial CW-approximation 
$g: \tilde{B} \stackrel{\sim^w}{\longrightarrow} B$. Alternatively, the choice of weak homotopy equivalences as weak 
equivalences leads to a homotopy equivalent $K$-theory space. 
%\commentg{No details are needed. Here is an argument. Need to show that 
%$g^*$ is a homotopy equivalence from the $K$-theory space $A'(p)$ defined by the new weak equivalences 
%to $A(\tilde{p})$. We can assume that $\tilde{B}$ has the homotopy type of a CW-complex and $g$ is a fibration. Then $g^*$ has
%a homotopy inverse given by a push-forward functor.}
\end{rem}

\subsection{A model for the unit transformation.} \label{unit_map_model} 
We write $\mathbf{A}(X)$ and $\mathbf{K}(\mathcal{C})$, where $\mathcal{C}$ is a Waldhausen category, to denote the $\Omega$-spectrum defined by $A(X)$ and $K(\mathcal{C})$ respectively, obtained by iterating the $S_\bullet$-construction (see \cite{Waldhausen(1985)}). The unit transformation is a natural transformation of spectra 
$$\eta_X \colon \Sigma^{\infty} X_+ \longrightarrow \mathbf{A}(X).$$
For $X = \ast$, this is the map of spectra $\eta_{\ast} : \Sigma^{\infty} S^0 \to \mathbf{A}(\ast)$ which sends the non-basepoint of 
$S^0$ to the point $[S^0] \in A(\ast)$ corresponding to the based space $S^0$ as an object of $\Rfd(\ast)$. For general $X$,
$\eta_X$ is defined to be the composition
$$\Sigma^{\infty} X_+ \cong \Sigma^{\infty} S^0 \wedge X_+ \stackrel{\eta_{\ast} \wedge \mathrm{id}}{\longrightarrow} \mathbf{A}(\ast) \wedge X_+
\to \mathbf{A}(X)$$
where the last map is the assembly transformation for $A$-theory (see e.g. \cite{Dwyer-Weiss-Williams(2003)} for more details). 
For a geometric definition, following Waldhausen's manifold approach \cite{Waldhausen(1982)}, see also \cite{BDW(2009)}.

The purpose of this subsection is to define another model for the unit transformation. Let $\Rd(X)$ be the Waldhausen subcategory
of $\Rfd(X)$ with objects $(X \coprod S \leftrightarrows X)$ where $S$ is a discrete space. Note that weak equivalences in 
$\Rd(X)$ are isomorphisms and cofibrations are split. For technical reasons, we also consider a reduced version $\Rdd(X)$ of 
$\Rd(X)$, which is the full subcategory of $\Rd(X)$ containing the zero object and the objects: 
$$(X \coprod \{1, \cdots, m \} \leftrightarrows X).$$
Note that the inclusion $\Rdd(X) \to \Rd(X)$ is an equivalence of categories, so it induces a homotopy equivalence in $K$-theory. 
The category $\Rdd(X)$ does not detect the topology of $X$, i.e. $\Rdd(X)$ is isomorphic to $\Rdd(X^{\delta})$. We recall that $X^{\delta}$ denotes the space $X$ with the discrete topology. Moreover,
it is easy to see that 
$$\vert w \Rdd(X) \vert = \coprod_{m \geq 0} E\Sigma_m \times_{\Sigma_m} (X^{\delta})^m.$$
Since the cofibrations in $\Rdd(X)$ split, it follows that the canonical map
$$ \vert w \Rdd(X) \vert \to K(\Rdd(X))$$
is a group completion (see \cite[1.8]{Waldhausen(1985)}). By well-known results in the theory of infinite loop spaces (see e.g. \cite{Segal(1974)}), there is a natural stable equivalence
$$\Sigma^{\infty} X^{\delta}_+ \stackrel{\sim}{\to} \mathbf{K}(\Rdd(X))$$
which is defined by sending an element $x \in X^{\delta}$ to the associated retractive space $X \coprod \{1\} \leftrightarrows X$.
Also, following the methods of \cite{Boardman-Vogt}, \cite{May}, \cite{Segal(1973)}, one can also describe this equivalence 
geometrically by a natural (zigzag of) weak equivalence(s) of inifinite loop spaces
$$K(\Rdd(X)) \stackrel{\sim}{\longrightarrow} Q(X^{\delta}_+).$$ 

We can also define a bivariant version of $\Rd(X)$ as follows. Let $\Rd(X, \Delta^n)$ be the Waldhausen subcategory of
$\Rfd(X, \Delta^n)$ with objects: 
\[
\xymatrix{
Y \ar@<1ex>[r]^(.45)r \ar[dr]_q & X \times \Delta^n \ar[l]^(.55)i \ar[d] \\
& \Delta^n
}
\]
in $\Rfd(X, \Delta^n)$ such that in addition: for every $b\in \Delta^n$, the retractive space $((q)\inv(b), X)$ is an object
of $\Rd(X)$. Weak equivalences in $\Rd(X, \Delta^n)$ are isomorphisms and cofibrations are split. Similarly, we consider a reduced 
version $\Rdd(X, \Delta^n)$ of $\Rd(X, \Delta^n)$ which is the full subcategory with objects the zero object and the objects:
\[
\xymatrix{
(X \coprod \{1, \cdots, m\}) \times \Delta^n \ar@<1ex>[r] \ar[dr] & X \times \Delta^n \ar[l] \ar[d] \\
& \Delta^n
}
\]
The inclusion $\Rdd(X, \Delta^n) \to \Rd(X, \Delta^n)$ is an equivalence of categories, so it induces a homotopy 
equivalence in $K$-theory. Let $\sing_n(X) = \mathrm{Hom}(\Delta^n, X)$ denote the set of singular $n$-simplices of $X$. Then
observe that there is an isomorphism of categories 
$$\Rdd(X, \Delta^n) \cong \Rdd(\sin_n X)$$
and so we have
$$\vert w \Rdd(X, \Delta^n) \vert = \coprod_{m \geq 0} E \Sigma_m \times_{\Sigma_m} (\sing_n(X))^m.$$

We define the \emph{thick bivariant model} for the stable homotopy of $X$ to be the space
$$Q_{\Delta}(X) : = \Omega \vert ([q], [n]) \mapsto w S_q \Rd(X, \Delta^n) \vert$$
and its reduced version to be the space
$$\overline{Q}_{\Delta}(X) : = \Omega \vert ([q], [n]) \mapsto w S_q \Rdd(X, \Delta^n) \vert.$$
Note that the inclusion $\overline{Q}_{\Delta}(X) \stackrel{\sim}{\to} Q_{\Delta}(X)$ is a weak equivalence. We write 
$\mathbf{Q}_{\Delta}(X)$ and $\overline{\mathbf{Q}}_{\Delta}(X)$ to denote the associated $\Omega$-spectra. The terminology is justified by the following proposition.

\begin{prop} \label{thick_model_stable_htpy}
There is a natural stable equivalence 
$$\theta_X \colon \Sigma^{\infty} X_+ \simeq \Sigma^{\infty} |\sin_{\bullet} X|_+ \stackrel{\sim}{\longrightarrow} \overline{\mathbf{Q}}_{\Delta}(X) 
\simeq \mathbf{Q}_{\Delta}(X).$$
\end{prop}
\begin{proof}
We have the following identifications
\begin{multline*}
\vert ([q],[n]) \mapsto w S_q \Rdd(X, \Delta^n) \vert \cong \vert [n] \mapsto \vert [q] \mapsto w S_q \Rdd(X, \Delta^n) \vert \vert \cong
\\ \cong \vert [n] \mapsto B(\coprod_{m \geq 0} E\Sigma_m \times_{\Sigma_m} (\sing_n(X))^m) \vert  \cong B( \coprod_{m \geq 0} E\Sigma_m \times_{\Sigma_m} 
\vert \sin_{\bullet} X \vert^m)
\end{multline*}
where $B(-)$ is the classifying space of a topological monoid. Then there is a natural stable equivalence as required, which 
is defined by the inclusion 
$$|\sin_{\bullet}X| \hookrightarrow \coprod_{m \geq 0} E\Sigma_m \times_{\Sigma_m} \vert \sin_{\bullet}X \vert^m \to \Omega B( \coprod_{m \geq 0} E\Sigma_m \times_{\Sigma_m} \vert \sin_{\bullet}X \vert^m).$$
\end{proof}

The exact inclusions $\Rdd(X, \Delta^n) \hookrightarrow \Rd(X, \Delta^n) \hookrightarrow \Rfd(X, \Delta^n)$ induce maps 
between the $K$-theory spectra, and so also a natural map (of spectra) between the thick models:
$$\eta^{\Delta}_X : \overline{\mathbf{Q}}_{\Delta}(X) \stackrel{\sim}{\to} \mathbf{Q}_{\Delta}(X) \to \mathbf{A}_{\Delta}(X).$$

\begin{prop} \label{unit_map}
The following diagram of spectra commutes up to homotopy,
\[
\xymatrix{
\Sigma^{\infty} X_+ \ar[dr]^{\eta_X} & \Sigma^{\infty} |\sin_{\bullet} X|_+ \ar[r]^(0.6){\theta_X} \ar[l]_(0.6){\sim} &\overline{\mathbf{Q}}_{\Delta}(X) \ar[d]^{\eta^{\Delta}_X} \\  
& \mathbf{A}(X) \ar[r]^{\sim} & \mathbf{A}_{\Delta}(X). & 
}
\]
\end{prop}
\begin{proof}
Note that both compositions are natural transformations between spectra-valued functors from a functor that is excisive, i.e. it preserves homotopy pushouts. It follows that both compositions are determined by their evaluation at $X = \ast$ (see also \cite{WeWi(1995)}). Hence it suffices to show 
that the following diagram commutes up to homotopy, 
\[
\xymatrix{
\Sigma^{\infty} S^0 \ar[dr]^{\eta_\ast} & \Sigma^{\infty} |\sin_{\bullet} (\ast)|_+ \ar[r]^(0.6){\theta_\ast} \ar[l]_(0.6){=} &\overline{\mathbf{Q}}_{\Delta}(\ast) \ar[d]^{\eta^{\Delta}_\ast} \\  
& \mathbf{A}(\ast) \ar[r]^{\sim} & \mathbf{A}_{\Delta}(\ast). & 
}
\]
Then the result follows because both compositions are defined by the map
$$S^0 \to A_{\Delta}(\ast),$$
which sends the non-basepoint to the element of $A_{\Delta}(\ast)$ defined by $S^0$ as an object of $\Rfd(\ast)$. 
\end{proof}

\section{The parametrized $A$-theory Euler characteristic}

The purpose of this section is to review a description of the parametrized $A$-theory Euler characteristic of Dwyer, Weiss 
and Williams \cite{Dwyer-Weiss-Williams(2003)} using bivariant $A$-theory. Let $p\colon E\to B$ be a fibration with homotopy finite fibers. 
The retractive space $E\times S^0$ over $E$ is an object of $\Rfd(p)$, so it defines a point \[\chi(p)\in A(p)\] 
called the \emph{bivariant $A$-theory characteristic} of $p$. Williams observed in \cite{Williams(2000)} that the 
parametrized $A$-theory characteristic of \cite{Dwyer-Weiss-Williams(2003)} is actually the image of $\chi(p)$ 
under a \emph{coassembly map}. 

\subsection{The coassembly map} In order to define this coassembly map, we recall first some facts about homotopy limits of categories. 
Let $\cat$ denote the (2-)category of small categories. For every small category $\calI$, the category $\cat^\calI$ of 
$\calI$-shaped diagrams in $\cat$ is enriched over $\cat$ as follows: if $\mathcal{F}, \mathcal{G} \colon \calI\to\cat$ are two functors, then the natural transformations from $\mathcal{F}$ to $\mathcal{G}$ are the objects of a small category 
$\hom(\mathcal{F},\mathcal{G})$. The set of morphisms between two natural transformations 
$\eta,\theta \colon \mathcal{F}\to \mathcal{G}$ is given by
\[\hom(\mathcal{F},\mathcal{G})(\eta,\theta)=\{H\colon \mathcal{F}\times \underline{[1]}\to \mathcal{G}; H_0= \eta, H_1= \theta \}\]
where $\underline{[1]}$ denotes the constant $\calI$-diagram at the category $[1]$. 

\begin{defn}
Let $\calI$ be a small category and $\mathcal{G} \colon \calI\to\cat$ an $\calI$-shaped diagram of small categories. 
The \emph{homotopy limit} of $\mathcal{G}$ is the category
\[\holim \mathcal{G}:=\hom(\calI/?, \mathcal{G})\]
where $\calI/?\colon \calI\to\cat$ is defined on objects by sending $i \in \mathrm{ob}\calI$ to the over category $\calI/i$.
\end{defn}

\begin{rem} 
The nerve of the homotopy limit of an $\calI$-shaped diagram of small categories agrees with the homotopy
 limit of the associated $\calI$-shaped diagram of the nerves as defined in \cite{Bousfield-Kan}. However, this definition should not be confused with the notion of homotopy limit as the derived functor of limit on the category of $\calI$-shaped categories. 
\end{rem}

\begin{rem} 
If the functor $\mathcal{G}$ actually takes values in Waldhausen categories (and exact functors), then, by the naturality of the construction, there is a simplicial category 
$[n] \mapsto \holim wS_n \mathcal{G}$.
\end{rem}

The following lemma is a straightforward exercise in the definition of the homotopy limit.

\begin{lem}\label{technical_lem}
A functor $F\colon\calC\to\holim \mathcal{G}$ determines and is determined by the following data:
\begin{enumerate}
\item for each $i\in \calI$, a functor $F_i\colon \calC\to \mathcal{G}(i)$, and
\item for each morphism $u \colon i\to j$ in $\calI$, a natural transformation $u^!$ from $\mathcal{G}(u) \circ F_i$ to $F_j$, such that $\id_i^!=\id$, and the following cocycle condition is satisfied: for every $v \colon j\to k$ in $\calI$, we have
\[(v \circ u)^!=v^!\circ \mathcal{G}(v) (u^!)\]
as natural transformations between functors $\calC\to \mathcal{G}(k)$.
\end{enumerate}
\end{lem}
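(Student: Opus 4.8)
The plan is to unwind the definition $\holim\mathcal{G}=\hom(\calI/?,\mathcal{G})$ and to exploit two elementary features of the over-categories. First, for every $i\in\calI$ the category $\calI/i$ has a terminal object, namely $\id_i$, and every object $u\colon j\to i$ of $\calI/i$ comes with a distinguished morphism $u\colon(u)\to(\id_i)$ to it, whose underlying morphism of $\calI$ is $u$ itself. Second, for a morphism $u\colon i\to j$ of $\calI$ the induced functor $u_*\colon\calI/i\to\calI/j$, i.e.\ $(\calI/?)(u)$, sends the terminal object $\id_i$ to the object $u\colon i\to j$ of $\calI/j$. Recall that an object of $\holim\mathcal{G}$ is a natural transformation $\eta\colon\calI/?\Rightarrow\mathcal{G}$, that is, a family of functors $\eta_i\colon\calI/i\to\mathcal{G}(i)$ with $\mathcal{G}(f)\circ\eta_i=\eta_j\circ f_*$ for all $f\colon i\to j$, and that a morphism $\eta\to\theta$ is a compatible family of natural transformations $\eta_i\Rightarrow\theta_i$ (this is what the displayed formula for $\hom(\mathcal{F},\mathcal{G})(\eta,\theta)$ encodes).

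Given a functor $F\colon\calC\to\holim\mathcal{G}$, I would first extract the data $(F_i,u^!)$. Define $F_i\colon\calC\to\mathcal{G}(i)$ by composing $F$ with ``evaluation of the natural transformation at $i$'', followed by ``evaluation of the resulting functor $\calI/i\to\mathcal{G}(i)$ at the terminal object $\id_i$''; morphisms of $\calC$ are handled the same way, using the homotopies that $F$ assigns to them. For $u\colon i\to j$ the transformation $u^!$ is defined objectwise: at $c\in\calC$, naturality of the transformation $F(c)$ identifies $\mathcal{G}(u)\bigl(F_i(c)\bigr)$ with $F(c)_j\bigl(u_*(\id_i)\bigr)=F(c)_j(u)$, and applying the functor $F(c)_j$ to the distinguished morphism $(u)\to(\id_j)$ of $\calI/j$ yields a map $F(c)_j(u)\to F(c)_j(\id_j)=F_j(c)$ in $\mathcal{G}(j)$; the composite is $u^!_c$. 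Naturality of $u^!$ in $c$ comes from the compatibility built into the morphisms of $\holim\mathcal{G}$, the identity $\id_i^!=\id$ is immediate, and the cocycle condition reduces, after these identifications, to the functoriality of $F(c)_k$ applied to the factorization of the distinguished morphism $(vu)\to(\id_k)$ in $\calI/k$ through the object $v\colon j\to k$.

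For the converse, given data $(F_i,u^!)$ as in (i)--(ii) I would reconstruct $F$ as follows. For $c\in\calC$, let $F(c)_i\colon\calI/i\to\mathcal{G}(i)$ send an object $u\colon j\to i$ to $\mathcal{G}(u)\bigl(F_j(c)\bigr)$ and send a morphism $g\colon(u)\to(u')$ of $\calI/i$ (so $u'g=u$, hence $\mathcal{G}(u)=\mathcal{G}(u')\circ\mathcal{G}(g)$) to $\mathcal{G}(u')\bigl(g^!_c\bigr)\colon\mathcal{G}(u')\mathcal{G}(g)\bigl(F_j(c)\bigr)\to\mathcal{G}(u')\bigl(F_{j'}(c)\bigr)$. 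Functoriality of $F(c)_i$ is precisely the cocycle condition together with $\id^!=\id$, and the naturality relation $\mathcal{G}(f)\circ F(c)_i=F(c)_{i'}\circ f_*$ is just functoriality of $\mathcal{G}$, so $F(c)$ is a well-defined object of $\holim\mathcal{G}$. A morphism $\alpha\colon c\to c'$ of $\calC$ is sent to the morphism $F(c)\to F(c')$ of $\holim\mathcal{G}$ whose $i$-th component is, objectwise, $\mathcal{G}(u)\bigl(F_j(\alpha)\bigr)$; this is natural by naturality of the $u^!$. I would then check that the two passages are mutually inverse: evaluating the reconstructed data at terminal objects gives back the original $(F_i,u^!)$ by construction, and conversely the functor $F(c)_i$ reconstructed from $F$ coincides with the one extracted from $F$ because both send $u\colon j\to i$ to $\mathcal{G}(u)\bigl(F_j(c)\bigr)$, using $u_*(\id_j)=u$ once more.

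The one genuinely fiddly point, I expect, is the bookkeeping in the cocycle condition: one has to match the whiskered transformation $\mathcal{G}(v)(u^!)$ against the functoriality of $F(c)_k$ on composable morphisms of $\calI/k$, while keeping careful track of the direction of the distinguished maps to terminal objects and of the identifications supplied by naturality of $F(c)$. Everything else is a routine, if slightly tedious, diagram chase, which is why the statement is fairly described as a straightforward exercise.
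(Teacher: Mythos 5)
Your argument is correct and is precisely the intended one: the paper gives no proof, dismissing the lemma as ``a straightforward exercise in the definition of the homotopy limit,'' and your unwinding via the terminal object $\id_i$ of $\calI/i$, the distinguished morphisms $(u)\to(\id_j)$, and the reconstruction $F(c)_i(u\colon j\to i)=\mathcal{G}(u)(F_j(c))$ is the standard way to carry that exercise out. All the key verifications (the cocycle condition from functoriality of $F(c)_k$, functoriality of the reconstructed $F(c)_i$ from the cocycle condition, and the two passages being mutually inverse) check out.
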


We can now define the coassembly map associated to a fibration $p: E \to B$. We assume that $B$ is the geometric realization of a simplicial set $B_\bullet$. Let $\simp(B)$ denote the category of simplices of $B$: an object is a simplicial map $\sigma\colon \Delta^n_\bullet\to B_\bullet$, and a morphism from $\sigma$ to $\tau\colon \Delta^k_\bullet\to B_\bullet$ is a simplicial map $\Delta^n_\bullet\to \Delta^k_\bullet$ making the obvious diagram commutative. We will normally avoid the distinction between the simplex $\sigma$ and its geometric realization. Consider the functor
\[w\Rfd(E\vert_?)\colon \simp(B)\to\cat, \sigma\mapsto w\Rfd(E\vert_\sigma),\]
which is defined on the morphisms by the push-forward maps. For every $\sigma\in\simp(B)$, there is a restriction functor
\[F_\sigma\colon w\Rfd(p)\to w\Rfd(\sigma^*p)\hookrightarrow w\Rfd(E\vert_\sigma)\]
which sends a retractive space $X$ over $E$, which fibers over $B$, to its restriction over the simplex $\sigma$ viewed as a retractive space over the corresponding restriction of $E$. If $u \colon \sigma\to\tau$ is a morphism in $\simp(B)$, then there is a natural transformation induced by the canonical inclusions,
\[u^!\colon u_* F_\sigma\to F_\tau.\]
An easy check shows that the cocycle condition is satisfied. The same construction works when $\Rfd$ is replaced by $S_n\Rfd$, the $n$-th simplicial degree in Waldhausen's $S_\bullet$-construction. Thus, by the Lemma \ref{technical_lem}, we obtain (simplicial) functors
\[c \colon w\Rfd(p)\to\holim_{\simp (B)} w\Rfd(E\vert_?), \quad 
c \colon wS_\bullet \Rfd(p)\to \holim_{\simp (B)} wS_\bullet \Rfd(E\vert_?).\]

\begin{rem}
Again there is a technical point to consider. As it stands, the category $\Rfd(\sigma^* p)$ is \emph{not} a subcategory
of $\Rfd(E\vert_\sigma)$ since an object in the former category is a subset of 
$E\vert_\sigma\amalg (\Delta^n\times \calU)$ while an object in the latter category is a subset of 
$E\vert_\sigma\amalg \calU$. To obtain a functor $\Rfd(\sigma^* P) \to \Rfd(E\vert_\sigma)$, choose 
\begin{itemize}
\item a set-theoretic embedding of the standard simplex $\Delta^n$ into $\calU$, and 
\item a bijection $\calU\times\calU\to\calU$.
\end{itemize}
Then we have $\Delta^n\times\calU\subset\calU\times\calU\cong \calU$ and we obtain a well-defined functor (which is, moreover, an embedding of categories) $\Rfd(\sigma^* p)\hookrightarrow \Rfd(E\vert_\sigma)$.
\end{rem}

We make the following 

\begin{obs} For every functor $\mathcal{G} \colon \calI\to\cat$, the geometric realization defines a 
map $\vert\cdot\vert\colon \vert\holim \mathcal{G} \vert\to\holim\vert \mathcal{G}\vert$. This map is adjoint to the 
simplicial map
\begin{displaymath}
N_{\bullet} \holim \mathcal{G}\xrightarrow{\vert\cdot\vert} \mathrm{Hom}(\Delta^{\bullet}, \map_{\calI}(\vert\calI/?\vert, \vert\mathcal{G}\vert)) =
\sin_{\bullet}(\holim \vert \mathcal{G}\vert),
\end{displaymath}
using the standard model for $\holim \vert \mathcal{G}\vert$ and where $\sin_{\bullet}(-)$ denotes the simplicial set of singular simplices. If $\mathcal{G}$ takes values in Waldhausen categories, then similarly there is a map $\vert\cdot\vert\colon \vert \holim wS_\bullet \mathcal{G}\vert\to\holim \vert wS_\bullet \mathcal{G}\vert$. Moreover, by taking loop spaces, we obtain the map
\[\rho \colon \Omega \vert\holim wS_\bullet \mathcal{G}\vert\to \holim \Omega\vert wS_\bullet \mathcal{G}\vert=\holim K \circ \mathcal{G}.\]
\end{obs}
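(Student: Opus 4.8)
The plan is to treat this as the standard observation that geometric realization, being a colimit, maps canonically into a homotopy limit; the only real work is to pin down compatible point-set models. First I would pass to the Bousfield--Kan description. For any $\mathcal{G}\colon\calI\to\cat$, the Remark identifying $N\holim\mathcal{G}$ with the homotopy limit of the diagram of nerves (in the sense of \cite{Bousfield-Kan}) lets us write
\[\vert\holim\mathcal{G}\vert \;=\; \bigl\vert\,\map_\calI\bigl(N(\calI/?),\,N\mathcal{G}\bigr)\,\bigr\vert,\]
where $\map_\calI$ is the simplicial set of natural transformations, concretely an equalizer of products of function complexes. Dually, for a diagram of spaces $Z\colon\calI\to\CGHaus$ one uses the standard model $\holim_\calI Z=\map_\calI(\vert\calI/?\vert,Z)$ with $\vert\calI/?\vert$ the diagram of classifying spaces; in particular $\map(\Delta^k,\map_\calI(\vert\calI/?\vert,\vert\mathcal{G}\vert))=\map(\Delta^k,\holim_\calI\vert\mathcal{G}\vert)$, which is the identification already made in the statement.

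Next I would construct the map. The one ingredient that is not pure formality is the natural comparison $\vert\map(K,L)\vert\to\map(\vert K\vert,\vert L\vert)$ for simplicial sets $K,L$: an $n$-simplex of $\map(K,L)$ is a simplicial map $\Delta^n\times K\to L$, whose realization is a map $\Delta^n\times\vert K\vert\cong\vert\Delta^n\times K\vert\to\vert L\vert$ --- here using that realization preserves finite products once we work in $\CGHaus$ --- and hence a map $\Delta^n\to\map(\vert K\vert,\vert L\vert)$, compatibly in $n$. Assembling this over the index category $\calI$ --- using that realization preserves finite limits, hence the equalizer defining $\map_\calI$, while mapping canonically into the (infinite) products --- produces
\[\bigl\vert\map_\calI(N(\calI/?),N\mathcal{G})\bigr\vert\;\longrightarrow\;\map_\calI(\vert\calI/?\vert,\vert\mathcal{G}\vert)\;=\;\holim_\calI\vert\mathcal{G}\vert,\]
and composing with the identification above gives $\vert\cdot\vert\colon\vert\holim\mathcal{G}\vert\to\holim\vert\mathcal{G}\vert$. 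Unwinding this in simplicial degree $k$ reproduces the displayed formula: by Lemma~\ref{technical_lem} a $k$-simplex of $\holim\mathcal{G}$ is a natural family of functors $\calI/i\times\underline{[k]}\to\mathcal{G}(i)$; taking nerves and realizing yields a compatible family $\Delta^k\times\vert\calI/i\vert\to\vert\mathcal{G}(i)\vert$, i.e.\ a map $\Delta^k\to\holim_\calI\vert\mathcal{G}\vert$, and this assignment is simplicial in $[k]$, hence adjoint to the asserted map.

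For the Waldhausen version I would apply the above levelwise. Naturality of the construction gives, for each $n$, a map $\vert\holim wS_n\mathcal{G}\vert\to\holim_\calI\vert wS_n\mathcal{G}\vert$, and these assemble into a map of simplicial spaces in $[n]$; realizing in the $[n]$-direction and inserting the canonical comparison $\bigl\vert[n]\mapsto\holim_\calI\vert wS_n\mathcal{G}\vert\bigr\vert\to\holim_\calI\vert wS_\bullet\mathcal{G}\vert$ (again a colimit mapping into a limit) yields $\vert\holim wS_\bullet\mathcal{G}\vert\to\holim_\calI\vert wS_\bullet\mathcal{G}\vert$. Looping, and using that $\Omega$ commutes with the homotopy limit $\holim_\calI$ together with the definition $K(-)=\Omega\vert wS_\bullet(-)\vert$, produces the desired map $\rho\colon\Omega\vert\holim wS_\bullet\mathcal{G}\vert\to\holim_\calI K\circ\mathcal{G}$.

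I do not expect a genuine obstacle: the content is entirely a matter of choosing compatible models and bookkeeping. The two points that require care are (i) that geometric realization commutes with finite products (and finite limits) only after passing to compactly generated Hausdorff spaces --- which is the standing convention of the paper, so it is available --- and (ii) that, since realization is a colimit while $\holim_\calI$ and $\Omega$ are limits, each step yields a \emph{comparison map} in the stated direction rather than an equivalence; verifying that these maps compose correctly, and that the levelwise maps really do glue into a map of simplicial spaces, is the whole of the (routine) verification.
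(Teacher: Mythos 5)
Your proposal is correct and follows essentially the same route as the paper, which offers no argument beyond the Observation itself: the map is defined degreewise by sending a $k$-simplex of $N\holim\mathcal{G}$, i.e.\ a natural family of functors $\calI/i\times\underline{[k]}\to\mathcal{G}(i)$, to its realization $\Delta^k\times\vert\calI/i\vert\to\vert\mathcal{G}(i)\vert$, and then passing to the adjoint; the Waldhausen and looped versions follow by naturality in the $S_\bullet$-direction and the fact that $\Omega$ commutes with the limit defining $\holim$. Your additional remarks on compactly generated spaces and on realization mapping canonically into (rather than preserving) the infinite limits are exactly the right points of care, and nothing further is needed.
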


\begin{defn}
The \emph{$A$-theory coassembly map} is defined to be the composite map
\begin{displaymath}
\coass_p \colon A(p) \xrightarrow{\Omega \vert c \vert} \Omega\vert\holim_{\simp (B)} wS_\bullet \Rfd(E\vert_?)\vert \xrightarrow{\rho} \holim_{\simp (B)} A(E\vert_?).
\end{displaymath}
\end{defn}

The target of the coassembly map is again natural with respect to the covariant and contravariant operations induced respectively by the push-forward and pull-back maps. If $f\colon E\to V$ is a map between fibrations over $B$, 
then there is a natural transformation $\Rfd(E\vert_?)\to \Rfd(V \vert_?)$ inducing
\[f_*\colon \holim_{\simp(B)} A(E\vert_?)\to \holim_{\simp(B)} A(V \vert_?).\]
On the other hand, consider a pullback diagram 
\begin{equation*}
\xymatrix{
E' \ar[d]^{p'} \ar[rr] && E\ar[d]^p\\
B' \ar[rr]^g && B
}
\end{equation*}
and suppose that $g\colon B'\to B$ is the geometric realization of a simplicial map $g_\bullet$. So there is a functor $\simp(g) \colon\simp(B') \to \simp(B)$ and for every object $\sigma$ of $\simp(B')$, there is a canonical isomorphism $E'\vert_\sigma \cong E\vert_{g \circ \sigma}$, since both spaces are just the pullback of $E$ along $g \circ  \sigma$. Hence we obtain a natural isomorphism of functors $$\simp(g)^* A(E\vert_?) \cong A(E'\vert_?)$$ defined 
on $\simp(B')$. Then we can define the pull-back operation as 
\[g^*\colon \holim_{\simp(B)} A(E\vert_?) \to \holim_{\simp(B')}  \simp(g)^* A(E\vert_?) \xrightarrow{\cong} \holim_{\simp(B')} A(E'\vert_?),\]
where the first map is induced by base-change along the functor $\simp(g)$. An easy check shows that $(g\circ h)^*= h^*\circ g^*$. The following proposition, which will be important later on, 
is now obvious.   

\begin{prop}\label{lem:naturality_of_coassembly}
The $A$-theory coassembly map is natural with respect  to the covariant and the contravariant operations.
\end{prop}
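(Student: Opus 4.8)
The plan is to verify the two naturality claims separately by chasing the definitions through the three maps that constitute the coassembly map, namely $\Omega|\alpha|$, $\rho$, and (implicitly) the identification of $\holim$ target. Since the coassembly map $\coass(p)$ is the composite $\rho \circ \Omega|\alpha|$, and $\rho$ is the canonical comparison map $|\holim| \to \holim|\cdot|$ arising purely formally from geometric realization, it suffices to check that each of the constituent maps commutes with the covariant (push-forward) and contravariant (pull-back) operations, both on the source side (as developed in Section 3) and on the target side (the $\holim_{\simp(B)}$ constructions just defined). The map $\rho$ is natural in the diagram $\mathcal{G}\colon \calI \to \cat$ essentially by inspection, since geometric realization is a functor and the standard model for $\holim$ is functorial in $\mathcal{G}$; applying this with $\mathcal{G} = wS_\bullet\Rfd(E|_?)$ and with the evident maps induced by $f$ and by $\simp(g)$ gives the required commuting squares for $\rho$.

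First I would treat the covariant case. Given a fiberwise map $f\colon E \to V$ over $B$, the push-forward on bivariant categories is the exact functor $f_*\colon \Rfd(p) \to \Rfd(q)$, $X \mapsto X \cup_E V$, and for each simplex $\sigma \in \simp(B)$ this restricts compatibly to the push-forward $\Rfd(E|_\sigma) \to \Rfd(V|_\sigma)$ along $f|_{E|_\sigma}$. The key point is that the restriction functors $F_\sigma$ and the structure transformations $u^!$ used to define $\alpha$ via Lemma \ref{technical_lem} are manifestly compatible with these push-forwards: restricting a pushout $X \cup_E V$ over a simplex $\sigma$ gives $(X|_\sigma) \cup_{E|_\sigma} (V|_\sigma)$ because pushouts commute with pullback along $\sigma^* p$, and the inclusions of restrictions defining $u^!$ are preserved by $f_*$ for formal reasons. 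Hence the two functors $w\Rfd(p) \to \holim_{\simp(B)} w\Rfd(V|_?)$ obtained by $(\text{push}) \circ \alpha$ and $\alpha \circ f_*$ literally agree, and the same holds degreewise in the $S_\bullet$-direction; this gives the commuting square for $\Omega|\alpha|$, and composing with the already-established naturality of $\rho$ finishes this case.

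Second I would treat the contravariant case, which is where one expects the only mild subtlety. Given the pullback square with $g\colon B' \to B$ the realization of a simplicial map $g_\bullet$, the pull-back $g^*\colon \Rfd(p) \to \Rfd(p')$ sends $X \mapsto X \times_B B'$, while on the $\holim$ target side $g^*$ is base-change along $\simp(g)\colon \simp(B') \to \simp(B)$ followed by the canonical isomorphism $\simp(g)^* A(E|_?) \cong A(E'|_?)$ coming from $E'|_\sigma \cong E|_{g\sigma}$. The thing to check is that restricting $X \times_B B'$ over a simplex $\sigma$ of $B'$ agrees, under this canonical isomorphism, with restricting $X$ over $g \circ \sigma$ — but both are just the pullback of $X$ along $g \circ \sigma$, so they agree on the nose, and the structure transformations $u^!$ are carried to one another by the induced simplicial map $\Delta^n_\bullet \to \Delta^k_\bullet$. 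Thus $\alpha$ again commutes strictly with $g^*$ on source and target, degreewise in $S_\bullet$, and applying $\Omega|\cdot|$ together with the naturality of $\rho$ gives the result. The main obstacle, such as it is, is purely bookkeeping: one must keep the set-theoretic conventions (the set $\calU$, the chosen embeddings $\Delta^n \hookrightarrow \calU$ and bijection $\calU \times \calU \cong \calU$) consistent so that these agreements are genuine equalities of functors rather than merely natural isomorphisms, which is exactly what the Naturality remark in Section 3 and the remark following the definition of $\alpha$ have set up. Granting that, the proposition is immediate, as the statement already indicates.
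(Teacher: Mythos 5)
Your proposal is correct and follows exactly the route the paper intends: the paper declares the proposition ``now obvious'' after setting up the push-forward and pull-back operations on the homotopy limit target and the set-theoretic conventions making everything strictly functorial, and your write-up simply carries out the definitional chase (restriction of an adjunction space over a simplex is the adjunction space of restrictions; restriction of $X\times_B B'$ over $\sigma$ equals restriction of $X$ over $g\circ\sigma$; $\rho$ is formally natural) that justifies that claim. No gaps.
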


\subsection{The $A$-theory characteristic} \label{DWW-RR} We now recall the definition of the 
parametrized $A$-theory Euler characteristic from \cite{Dwyer-Weiss-Williams(2003)}, 
\cite{Williams(2000)}.

\begin{defn}
Let $p \colon E\to B = \vert B_\bullet \vert$ be a fibration with homotopy finite fibers. 
\begin{enumerate}
\item The \emph{bivariant $A$-theory characteristic} $\chi(p)\in A(p)$ is the point determined by the retractive 
space $E\times S^0$ over $E$, considered as an object of $\Rfd(p)$.
\item The \emph{parametrized $A$-theory Euler characteristic} $\chi^{DWW}(p)$ is the image of the 
bivariant $A$-theory characteristic  under the coassembly map 
\[ \coass_p \colon A(p)\to \holim_{\simp (B)} A(E\vert_?). \]
\end{enumerate}
\end{defn}

The element $\chi^{DWW}(p)$ is commonly viewed as a ``classifying map'' from $B$ in the following way 
(see also \cite[I.1.6]{Dwyer-Weiss-Williams(2003)}). There is a canonical weak equivalence from the homotopy limit 
$$\holim_{\simp (B)} A(E\vert_?) = \map_{\simp(B)} (|\simp(B)/?|, A(E\vert_?))$$
to the space of maps over $B$
$$\map_{B}(\hocolim_{\simp (B)} |\simp(B)/?|, \hocolim_{\simp (B)} A(E\vert_?))$$
which is defined by $f \mapsto \hocolim(f)$. Since the canonical map 
$$\hocolim_{\simp(B)} |\simp(B)/?| \to \vert \simp(B) \vert \to B$$ 
is a weak equivalence, it is possible to identify the latter space with a space of 
sections, and thus view the parametrized $A$-theory Euler characteristic as a section
$$\chi^{DWW}(p): B \to A_B(E) : = \hocolim_{\simp (B)} A(E \vert_?)$$
which is uniquely specified up to a contractible space of choices. 

The smooth Riemann-Roch theorem of \cite{Dwyer-Weiss-Williams(2003)}, which describes the element $\chi^{DWW}(p)$ in the case 
where $p$ is a smooth bundle, will be very relevant to our conclusions in the next section. With the convention above in mind, 
we recall the statement (see \cite[Theorem 8.5]{Dwyer-Weiss-Williams(2003)}) and refer to its source for a complete discussion.

\begin{thm}[Dwyer-Weiss-Williams \cite{Dwyer-Weiss-Williams(2003)}] \label{smooth-RR}
Let $p: E \to B$ be a smooth bundle of compact manifolds (possibly with boundary). Then the parametrized $A$-theory 
Euler characteristic $\chi^{DWW}(p): B \to A_B(E)$ is homotopic over $B$, by a preferred homotopy, to the
composition of the parametrized transfer map $\mathrm{tr}(p): B \to (Q_+)_B(E)$ with the fiberwise unit map 
$\eta_p : (Q_+)_B(E) \to A_B(E)$.
\end{thm}

In particular, if $p: E \to B$ is a smooth bundle of compact $d$-dimensional manifolds, then we have a homotopy commutative 
diagram
\begin{equation} \label{smooth-RR-picture}
\xymatrix{
& (Q_+)_B(E) \ar[d]^{\eta_p} \ar[r] & Q(E_+) \ar[d]^{\eta_E} \ar[r] & Q(BO(d)_+) \ar[d]^{\eta_{BO(d)}} \\
B \ar[r] \ar[ur]^{tr(p)} & A_B(E) \ar[r] & A(E) \ar[r] & A(BO(d))
}
\end{equation}
where the right-hand horizontal maps are induced by the classifying map of the vertical tangent bundle over $E$ and the 
other two horizontal maps are defined by the inclusions of the fibers of $p$ into $E$. The vertical maps come from the 
unit tranformation of functors from $X \mapsto Q(X_+)$ to $A$-theory. We recall that this is defined as the composition of 
$$Q(X_+) \longrightarrow A^{\%}(X):= \Omega^{\infty}(\mathbf{A}(\ast) \wedge X_+),$$
given by the unit map $\Sigma^{\infty} S^0 \to \mathbf{A}(\ast)$ of the ring spectrum $\mathbf{A}(\ast)$, with the assembly 
natural map $A^{\%}(X) \to A(X)$. The composite $B \to Q(E_+)$ is the classical Becker-Gottlieb transfer map (see \cite{BeGo}).

\subsection{A scanning map} \label{scanning_map}

We mention the following alternative description of the coassembly map in the special case of a trivial fibration 
$\pi_B:X \times B \to B$. This will be needed in the next section. To simplify the notation, let us abbreviate 
\[W(X, B) : = \vert w \Rfd\fibr{X \times B}{B} \vert.\]
Assume that $B$ is the geometric realization of a simplicial set $B_\bullet$. Pulling back along an $n$-simplex of
$B_{\bullet}$ defines a map 
$$W(X, B) \times \mathrm{Hom}(\Delta^n_{\bullet}, B_{\bullet}) \to W(X, \Delta^n)$$
which is natural in $n$. Thus, for every $x\in W(X,B)$, pulling back along the inclusion of all 
simplices defines a simplicial map 
$x^\ast: B_\bullet\to W(X, \Delta^{\bullet})$. Define the \emph{scanning map} to be the map
\[\scan(X,B): W(X, B) \to \map(B,\left\vert W(X, \Delta^{\bullet}) \right\vert )\]
which sends $x$ to the geometric realization of the simplicial map $x^\ast$. The same construction at the level of $A$-theory yields a 
map
\[\scan(X,B)\colon A\fibr{X\times B}{B}\to \map(B, A_\Delta(X))\]
and the following diagram is commutative, where the vertical maps are given by ``group completion'' \footnote{The term 
``group completion'' here and elsewhere refers to the canonical map $\vert w \mathcal{C} \vert \to K(\mathcal{C})$ for 
every Waldhausen category $\mathcal{C}$, see \cite[1.3, 1.8]{Waldhausen(1985)}.}, 
\[\xymatrix{
W(X,B) \ar[rr]^(.4){\scan(X,B)} \ar[d]
&&  {\map(B, \vert W(X, \Delta^\bullet)\vert)} \ar[d]
\\
A\fibr{X\times B}{B} \ar[rr]^{\scan(X,B)}
&& {\map(B, A_\Delta(X))}
}\]

The comparison of the coassembly and scanning maps will need the following proposition. 

\begin{prop} \label{coass_is_eq}
The $A$-theory coassembly map of $p: E \to B$ is a homotopy equivalence if $B$ is contractible.
\end{prop}
\begin{proof}
This is obvious if $B$ is a point, since then the coassembly map is essentially the identity map. Suppose that $B$ is contractible. Let $F$ be the fiber of $p: E \to B$ over a $0$-simplex of $B$. By naturality, we have a commutative diagram
\[\xymatrix{
A\fibr{E}{B} \ar[rr] \ar[d]^(.55){\simeq} && {\holim_{\simp(B)} A(E\vert_?)} \ar[d]\\
A(F) \ar@{=}[rr] && A(F)  
}\]
where the vertical maps are given by restriction at the $0$-simplex and the horizontal ones by the coassembly map. By the homotopy invariance of Proposition \ref{lem:homotopy_invariance_of_bivariant_stuff}, the left-hand vertical arrow is a homotopy equivalence. Since the functor $A(E\vert_?)$ sends all morphisms to homotopy equivalences, its homotopy limit is homotopy equivalent to the space of sections of a fibration over $\vert\simp (B)\vert$. Under this identification, the right-hand vertical map corresponds to the evaluation of a section at the chosen base-point. Since $\vert\simp (B)\vert\simeq *$, this evaluation map is also a homotopy equivalence and therefore the result follows.
\end{proof}

The next lemma shows that, up to the identification of a homotopy limit with a mapping space of sections, the coassembly and scanning maps of a trivial fibration agree.

\begin{lem} \label{lem:technical_lemma}
There is a commutative diagram in the homotopy category,
\[
\xymatrix{
A\fibr{X\times B}{B} \ar[rr]^{\scan(X,B)} \ar[rd]_{\coass_{\pi_B}} & & \map(B,   A_\Delta(X)) \ar[dl]^{h}_{\cong}\\
& {\underset{\simp (B)}{\holim} A(X \times ?)}. &
}
\]
\end{lem}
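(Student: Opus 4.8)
The plan is to exhibit an explicit natural map realizing $h$ and then to compare the two composites on the point-set level before passing to homotopy. First I would describe the map $h$. An element of $\map(B, A_\Delta(X))$ is, after adjunction and using the thick model $A_\Delta(X) = \Omega | [q],[n] \mapsto S_q W(X,\Delta^n)|$, a compatible family of maps $\Delta^n \to A_\Delta(X)$ indexed by the simplices $\sigma \colon \Delta^n_\bullet \to B_\bullet$ of $B$; equivalently, using Proposition \ref{lem:homotopy_invariance_of_bivariant_stuff} to identify $A_\Delta(X) \simeq A(X \times \Delta^n/\Delta^n)$ via the pull-back map along the collapse $\Delta^n \to *$, such a family is the data of a point of $\holim_{\simp(B)} A(X \times ?)$ in the sense of Lemma \ref{technical_lem}. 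This identification is precisely the definition of $h$, and it is a homotopy equivalence because $\Delta^n$ is contractible for every $n$: the coassembly map $A(X \times \Delta^n/\Delta^n) \to \map(\Delta^n, A_\Delta(X))$ is an equivalence by Corollary \ref{coass_is_eq} (applied over the contractible base $\Delta^n$), and $h$ is assembled from these equivalences over $\simp(B)$.

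Next I would unwind the scanning map and the coassembly map in parallel. Given $x \in A(X \times B/B)$, the scanned map $\scan(X,B)(x)$ restricted to a simplex $\sigma$ of $B$ is, by construction, the geometric realization of the simplicial map $\sigma^* x^*$, i.e. it is the pull-back of $x$ along the inclusion $\sigma \hookrightarrow B$ followed by the comparison $A(X \times \sigma/\sigma) \simeq A_\Delta(X)$. On the other hand, by the definition of $\coass(\pi_B)$ as $\rho \circ \Omega|\alpha|$, the component of $\coass(\pi_B)(x)$ at $\sigma \in \simp(B)$ is exactly $F_\sigma(x)$, which by definition of the restriction functor $F_\sigma$ is again the pull-back of $x$ along $\sigma \hookrightarrow B$, now viewed inside $w\Rfd(E|_\sigma)$ with $E = X \times B$. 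So both composites send $x$ to the same compatible family of pull-backs over $\simp(B)$, once one matches up the two models for that family through $h$.

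The comparison is therefore essentially formal once the bookkeeping is set up, and the two composites $h \circ \scan(X,B)$ and $\coass(\pi_B)$ agree strictly at the level of the simplicial (functorial) constructions of the previous sections; the homotopy in the statement is then just the passage from these strict identifications to the loop spaces of geometric realizations, exactly as in the Observation preceding the definition of the coassembly map and in the proof of Corollary \ref{coass_is_eq}. I would carry this out by checking compatibility degreewise in the $S_\bullet$-direction and in the simplicial direction of the thick model, invoking Proposition \ref{lem:naturality_of_coassembly} for the naturality of coassembly under the pull-back operations $\sigma^*$ that appear. The main obstacle I anticipate is not conceptual but organizational: reconciling the three different ``thick'' or simplicial models in play — the thick model $A_\Delta(X)$, the homotopy-limit model $\holim_{\simp(B)} A(X \times ?)$ via Lemma \ref{technical_lem}, and the mapping-space model $\map(B, A_\Delta(X))$ — and verifying that the evident comparison maps between them are strictly compatible with the simplicial structure maps of $\simp(B)$, so that the square at the end of the section and the triangle of the Lemma commute on the nose before one takes realizations. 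Once that is in place, homotopy commutativity of the triangle follows, and $h$ being an equivalence follows from Corollary \ref{coass_is_eq} as indicated.
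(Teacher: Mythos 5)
Your proposal is correct and follows essentially the same route as the paper: the commutativity comes from the naturality of the coassembly map under restriction to simplices of $B$, and the equivalence $h$ is identified by applying Corollary \ref{coass_is_eq} over each contractible simplex $\Delta^n$ together with the identification of the homotopy limit of a constant diagram with a mapping space (the paper organizes this bookkeeping into one explicit $2\times 3$ diagram passing through $\map(B, \vert\sintop A(X)\vert)$, and uses goodness of the simplicial spaces to realize the degreewise equivalences). The only point to make fully precise in your write-up is that last step — that the levelwise equivalences assemble to an equivalence after geometric realization — but this is a standard verification within the same strategy.
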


\begin{proof}
For convenience, we work here with the thick realization of simplicial spaces which always preserves homotopy equivalences (see \cite{Segal(1974)}). By Proposition \ref{lem:naturality_of_coassembly} the coassembly map is natural. It follows that the coassembly maps for the fibrations $X\times \Delta^n\to \Delta^n$, for varying $n$, fit together to define a simplicial map
\[\coass\colon A_\Delta(X)\to  \bigl\vert [n]\mapsto \holim_{\simp(\Delta^n)} A(X\times ?)\bigr\vert.\]
On the other hand there is a natural pairing
\[\holim_{\simp(B)} A(X\times ?) \times \map(\Delta^n_\bullet, B_\bullet)\to \holim_{\simp(\Delta^n)} A(X\times ?)\]
given by pull-back. It induces a scanning map
\[\scan\colon \holim_{\simp(B)} A(X\times ?) \to \map\bigl(B, \bigl\vert [n]\mapsto \holim_{\simp(\Delta^n)} A(X\times ?)\bigr\vert\bigr).\]
It is a consequence of naturality of both the scanning and the coassembly maps that the following diagram is commutative:
\[\xymatrix{
A\fibr{X\times B}{B}  \ar[rr]^{\scan(X,B)} \ar[d]^(.6){\coass_{\pi_B}} && {\map(B,A_\Delta(X))} \ar[d]^\coass_\simeq \\
{\holim_{\simp (B)} A(X\times ?)} \ar[rr]^(0.4){\scan}_(.4)\simeq  && {\map\bigl(B, \bigl\vert [n]\mapsto \holim_{\simp(\Delta^n)} A(X\times ?)\bigr\vert\bigr)}
}\]
We claim that the labelled arrows are homotopy equivalences, from which the conclusion follows with $h=\scan\inv\circ\coass$.

In fact the right-hand vertical map is induced by a degree-wise homotopy equivalence, as shown in Proposition \ref{coass_is_eq}, and therefore it is a homotopy equivalence. For the lower horizontal map, note that there is a 
chain of homotopy equivalences
\[\holim_{\simp(B)} A(X\times ?) \xrightarrow{\simeq} \holim_{\simp(B)} A(X) \xrightarrow{\cong} \map(\vert\simp(B)\vert, A(X)) \xleftarrow{\simeq}\map(B, A(X)).\]
Here the first map is induced by the projection $X\times ?\to X$, which is a homotopy equivalence. The second map is the standard homeomorphism for the Bousfield--Kan model 
\[\holim_\mathcal{C} F = \map_\mathcal{C}(\vert \mathcal{C}/?\vert, F)\]
of the homotopy limit. The third map is the homotopy equivalence induced by restriction along the last vertex map $\vert\simp(B)\vert\to \vert B\vert$ followed by the projection $\vert B\vert\to B$.

This chain of homotopy equivalences is natural in $B$. So letting $B$ vary over $\{\Delta^n : n \geq 0\}$, we obtain a chain of homotopy equivalences 
\[\bigl\vert [n]\mapsto \holim_{\simp(\Delta^n)} A(X\times ?)\bigr\vert \simeq \bigl\vert [n]\mapsto \map(\Delta^n, A(X))\bigr\vert=\vert \sintop A(X)\vert,\]
the geometric realization of the topological singular construction on the space $A(X)$.

By naturality, the scanning map of the lower line of the diagram extends to all the spaces appearing in the chain. Hence
that map is a homotopy equivalence if and only the corresponding map
\begin{equation} \label{sin_top}
\map(B, A(X)) \to \map(B, \vert\sintop A(X)\vert),
\end{equation}
which is also induced by scanning, is a homotopy equivalence. 
This map is certainly split-injective as the canonical ``co-unit'' map $\vert\sintop A(X)\vert\to A(X)$ induces a left-inverse. But this
canonical map also splits the inclusion of 0-simplices:
\[A(X) = \map(\Delta^0, A(X))\to \vert\sintop A(X)\vert,\]
which is a homotopy equivalence. Thus the co-unit map is also a homotopy equivalence,  hence the same is true for the
map \eqref{sin_top}.
\end{proof}

\section{The B\"{o}kstedt-Madsen map to $A$-theory}

B\"okstedt and Madsen \cite{Boekstedt-Madsen(2011)} defined an infinite loop map 
$$\tau: \Omega B \cob \to A(BO(d)).$$
Broadly speaking, the map sends an $n$-tuple of composable $d$-dimensional cobordisms to 
the union of the cobordisms, regarded as a filtered space, together with the map to 
$BO(d)$ that classifies the tangent bundle (cf.~\cite{Tillmann(1999)}). To make this precise, 
they described the map as a simplicial map on the singular set of $N_{\bullet} \cob$ to the thick model 
for the $A$-theory of $BO(d)$. 

\subsection{Definition of the map $\tilde{\tau}$} Following \cite{Boekstedt-Madsen(2011)}, we define similarly a map
\[
 \tilde{\tau}: \Omega B \cobd \to A(BO(d))
\]
that extends $\tau$ along the map induced by the inclusion functor $\cob \hookrightarrow \cobd$. The map $\tilde{\tau}$ is 
defined by first defining a bisimplicial map between bisimplicial categories
$$\tilde{\tau}_{p,q}: \sin_p N_q \cobdn \to wS_q \Rfd (\Gr_d(\RR^{d+n}),\Delta^{p})$$
and then letting $n \to \infty$ and taking the loop spaces of the geometric realizations. We recall that 
$\sin_{\bullet}(-)$ denotes the simplicial set of singular simplices and the set $\sin_{p} N_{q} 
\cobdn$ is regarded as a category with only identity morphisms. 

A (smooth) $p$-simplex of $N_q \cobdn$ 
$$\sigma: \Delta^p \to \cobdn((M_0, a_0), (M_1, a_1)) \times  \cdots \times \cobdn((M_{q-1},a_{q-1}), (M_q, a_{q}))$$
determines a (smoothly embedded) smooth fiber bundle over $\Delta^p$:
\[
\xymatrix{
E[a_0,a_q] \ar@{^{(}->}[r] \ar[d]^{\pi} & [a_0, a_q] \times \RR_+ \times \RR^{d-2+n} \times \Delta^p \ar[dl] \\
\Delta^p
}
\]
together with a filtering by a sequence of codimension zero smooth sub-bundles over $\Delta^p$, 
$$E[a_0,a_1] \subseteq \cdots \subseteq E[a_0, a_q]$$
where 
$$E[a_0,a_i] =  E[a_0, a_q] \cap ([a_0, a_i] \times \RR^{d-1+n} \times \Delta^p).$$ 
The classifying map of the vertical tangent bundle of $\pi$ restricts to maps
$$\mathrm{tan}^v(\pi): E[a_0, a_i] \to \Gr_d(\RR^{d+n})$$ 
for every $i=1,\dots, q$. This produces a filtered sequence of 
retractive spaces over $\Gr_d(\RR^{d+n}) \times \Delta^p$ whose terms are given by
$$\Gr_d(\RR^{d+n}) \times \Delta^p \rightarrowtail E[a_0,a_i] \cup_{E(a_0)} \Gr_d(\RR^{d+n}) 
\times \Delta^p \stackrel{r}{\rightarrow} \Gr_d(\RR^{d+n}) \times \Delta^p$$ where
$$E(a_0) = E[a_0, a_q] \cap (\{a_0\} \times \RR^{d-1+n} \times \Delta^p)$$ 
fibers also over $\Delta^p$, and the retraction map on $E[a_0,a_i]$ is defined as follows
$$r_{E[a_0,a_i]} = (\mathrm{tan}^v(\pi), \pi).$$
More generally, for $0 \leq i < j \leq q$, let  
$$E[a_i,a_j] =  E[a_0, a_q] \cap ([a_i, a_j] \times \RR^{d-1+n} \times \Delta^p)$$ 
$$E(a_j) =  E[a_0, a_q] \cap (\{a_j\} \times \RR^{d-1+n} \times \Delta^p).$$ 
The collection of the retractive spaces above extends canonically to an object 
$$\{E_{ij}\}_{0 \leq i \leq j \leq q} \in \ob(S_q \Rfd(\Gr_d(\RR^{d+n}), \Delta^{p}))$$ 
where
$$E_{ij} = E[a_i,a_j] \cup_{E(a_i)} \Gr_d(\RR^{d+n}) \times \Delta^p$$
are objects of $\Rfd(\Gr_d(\RR^{d+n}), \Delta^p)$. 

The following lemma is immediate from the definitions.

\begin{lem}
For every $1 \leq n \leq \infty$, the maps $\{\tilde{\tau}_{p,q}\}_{p,q}$ define a 
bisimplicial map $$\tilde{\tau}_{\bullet, \ast}: \sin_{\bullet} N_{\ast} \cobdn \to wS_{\ast} \Rfd (\Gr_d(\RR^{d+n}), \Delta^{\bullet}).$$
\end{lem}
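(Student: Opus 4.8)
The plan is to verify directly that the collection of maps $\tilde\tau_{p,q}$ is compatible with all four types of simplicial operations: the simplicial operations in the $p$-direction (coming from $\sin_\bullet$ on the source, and from the $\Delta^\bullet$-direction of the thick model on the target), and the simplicial operations in the $q$-direction (coming from the nerve $N_\bullet\cobdn$ on the source, and from Waldhausen's $S_\bullet$-construction on the target). Since the statement asserts these are bisimplicial maps, one must check naturality in each variable separately; compatibility of the two directions is then automatic because in each case the relevant operation only touches one of the two coordinates of the indexing.

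First I would treat the $p$-direction. A morphism $\theta\colon [p']\to[p]$ in $\Delta$ acts on the source by precomposition, sending a smooth $p$-simplex $\sigma$ to $\sigma\circ\theta_*$, where $\theta_*\colon\Delta^{p'}\to\Delta^p$ is the induced affine map. On the target, $\theta$ acts on $S_q\Rfd(\Gr_d(\RR^{d+n}),\Delta^\bullet)$ by the pull-back map along $\theta_*$, i.e.\ by restricting a retractive space fibered over $\Delta^p$ to one fibered over $\Delta^{p'}$. The key observation is that the embedded fiber bundle $E[a_0,a_q]\to\Delta^p$ associated to $\sigma\circ\theta_*$ is canonically the pull-back along $\theta_*$ of the bundle associated to $\sigma$, together with its filtration and the classifying map $\mathrm{tan}^v(\pi)$ of the vertical tangent bundle; this is immediate since forming the associated bundle is itself a pull-back construction along $\sigma$ (or $\sigma\circ\theta_*$). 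Hence the retractive space produced from $\sigma\circ\theta_*$ is the pull-back of the one produced from $\sigma$, which is precisely what the simplicial identity in the $p$-direction demands. One should record that this uses the strict-functoriality conventions for pull-backs set up in the Naturality Remark of Section 3 (the set-theoretic bookkeeping with $\calU$), so that the comparison is an equality and not merely a canonical isomorphism.

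Next I would treat the $q$-direction. A face map $d_i\colon N_q\cobdn\to N_{q-1}\cobdn$ for $0<i<q$ composes the $i$-th and $(i+1)$-st cobordisms; $d_0$ and $d_q$ forget the outermost cobordism on one side; and the degeneracies insert identity (``thin'') cobordisms. On the target, the face maps $d_i\colon S_q\to S_{q-1}$ of the $S_\bullet$-construction either delete a term of the filtration ($d_0$ and $d_q$) or pass to the appropriate subquotient in the filtered object ($0<i<q$), and the degeneracies repeat a stage of the filtration. The point is that the filtered sequence of retractive spaces $E[a_0,a_1]\cup\dots\subseteq E[a_0,a_i]\cup\dots\subseteq\dots$ associated to $\sigma$ behaves correctly: restricting $\sigma$ along a face of the nerve changes the underlying filtered bundle $E[a_0,a_1]\subseteq\dots\subseteq E[a_0,a_q]$ exactly by the corresponding deletion/composition of stages, because $E[a_0,a_j]$ depends only on the composite of the first $j$ cobordisms. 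For $0<i<q$ one must identify the subquotient appearing in $S_{q-1}$ with the retractive space built from $E[a_i,a_{i+1}]$ after collapsing $E[a_0,a_i]$; this is a diagram chase using that the retraction maps are built from $\mathrm{tan}^v(\pi)$ compatibly across the filtration, so that collapsing $E(a_0)$ and then $E[a_0,a_i]$ agrees with directly using the collar structure at $a_i$.

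The main obstacle I expect is the $q$-direction compatibility for the inner face maps $0<i<q$: matching the Waldhausen subquotient construction with the geometric operation of ``cutting'' the filtered cobordism requires being careful about the collars (properties (i)--(iii) in the definition of $\cobdn$) and about the precise form of the retraction maps $r_{E[a_0,a_i]}=(\mathrm{tan}^v(\pi),\pi)$, since one is quotienting by $E(a_0)$ on the nose and must check that the resulting object really is the one named by $S_\bullet$. Everything else is bookkeeping: the $p$-direction is a formal consequence of pull-back functoriality, the degeneracies in both directions are handled by the ``thin'' product cobordisms corresponding to repeated filtration stages, and independence of $n$ (hence the passage $n\to\infty$) is automatic because the constructions are compatible with the stabilization maps $\Gr_d(\RR^{d+n})\hookrightarrow\Gr_d(\RR^{d+n+1})$. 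As the statement itself says the lemma is ``immediate from the definitions,'' the proof is a verification rather than an argument with real content; the value of writing it out is to pin down the collar conventions that make the inner faces work.
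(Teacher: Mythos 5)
The paper offers no proof of this lemma --- it is declared ``immediate from the definitions'' --- and your overall strategy (check the $p$-direction via strict functoriality of pull-backs, check the $q$-direction by matching the nerve's face/degeneracy maps against those of the $S_\bullet$-construction) is exactly the verification being left to the reader. The $p$-direction discussion, including the remark that the $\calU$-conventions make the comparison an equality rather than an isomorphism, is fine, as is the treatment of degeneracies via the thin product cobordisms.

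However, you have the simplicial structure of Waldhausen's $S_\bullet$-construction backwards, and this misplaces where the actual content of the $q$-direction check lies. In $S_\bullet$, the face map $d_i$ for $i>0$ \emph{deletes} the $i$-th stage of the filtration (composing two cofibrations), while it is $d_0$ that passes to the quotient by the first stage. Correspondingly: the inner face maps of the nerve, which replace $W_i, W_{i+1}$ by $W_i\cup W_{i+1}$, produce precisely the old filtration $E[a_0,a_1]\subseteq\cdots\subseteq E[a_0,a_q]$ with the $i$-th stage omitted --- no subquotient appears, and the compatibility is an identity of subsets. The check you flag as ``the main obstacle'' is therefore the trivial one. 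The face map that genuinely requires an identification is $d_0$: dropping $W_1$ from the string yields the filtration $E[a_1,a_j]\cup_{E(a_1)}\Gr_d(\RR^{d+n})\times\Delta^p$, whereas $d_0$ in $S_\bullet$ produces the quotients $E[a_0,a_j]\cup_{E[a_0,a_1]}\Gr_d(\RR^{d+n})\times\Delta^p$; these agree because $E[a_0,a_j]=E[a_0,a_1]\cup_{E(a_1)}E[a_1,a_j]$ as subsets of $\Delta^p\times[a_0,a_q]\times\RR_+\times\RR^{d-2+n}$, and the retractions $(\mathrm{tan}^v(\pi),\pi)$ are compatible with this decomposition. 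As written, your proof would search for a subquotient that is not there in the inner-face case and would omit the one check that is not purely set-theoretic bookkeeping; with the roles of $d_0$ and $d_i$ ($0<i<q$) swapped, the argument goes through.
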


Setting $n = \infty$ and taking the loop spaces of the geometric realizations of these bisimplicial objects, we obtain a (weak 
\footnote{A weak
map of spaces is a zigzag of maps where the wrong-way arrows are weak homotopy equivalences. A weak map from $X$ to $Y$ defines a 
$0$-simplex in the simplicial set of maps from $X$ to $Y$ in the Dwyer-Kan hammock localization of the category of spaces and also
a morphism of the classical localization of the category of spaces at the class of weak homotopy equivalences.}) map:
$$\tilde{\tau}: \Omega B \cobd \stackrel{\sim}{\leftarrow} \Omega |\sin_{\bullet} N_{\bullet} \cobd| \stackrel{\tilde{\tau}}{\rightarrow} A_{\Delta}(BO(d)) \stackrel{\sim}{\leftarrow} A(BO(d)).$$ 
Note that $\tilde \tau$ is a map of loop spaces by definition. We note that the map $\tilde{\tau}$ is defined in exactly the same
way as the map $\tau : \Omega B \cob \to A(BO(d))$ in \cite{Boekstedt-Madsen(2011)}. In particular, the following proposition is 
obvious.

\begin{prop}
The following diagram of (weak) maps commutes in the homotopy category of spaces,
\[
\xymatrix{
\Omega B \cob \ar[rd]_{\tau} \ar@{^{(}->}[rr] && \Omega B \cobd \ar[dl]^{\tilde{\tau}} \\
& A(BO(d)) &
}
\]
\end{prop}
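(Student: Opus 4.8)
The plan is to exploit the fact that $\tau$ and $\tilde{\tau}$ are built from the \emph{same} simplicial recipe, so that the triangle commutes already at the level of the models defining the two maps. Recall from \cite{Boekstedt-Madsen(2011)} that $\tau$ itself is obtained, exactly as $\tilde{\tau}$ was above, from a bisimplicial map $\tau_{\bullet,\ast}\colon \sin_{\bullet}N_{\ast}\cobn \to S_{\ast}\Rfd(\Gr_d(\RR^{d+n}),\Delta^{\bullet})$ by setting $n=\infty$, realizing, and looping, and using the weak equivalences $B\cobn\simeq\vert\sin_{\bullet}N_{\bullet}\cobn\vert$ and $A(BO(d))\simeq A_{\Delta}(BO(d))$. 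The one point here that needs genuine care --- and which I expect to be the main obstacle --- is to identify this restricted construction with B\"okstedt and Madsen's original description of $\tau$ (the filtered union of the cobordisms, together with the classifying map of its tangent bundle) under the translation between the corresponding thick models for $A(BO(d))$; this bookkeeping is precisely what is meant by saying that $\tilde{\tau}$ \emph{extends} $\tau$.

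Granting this, one observes that the inclusion functor $\cob\hookrightarrow\cobd$ is compatible with the two recipes. A smooth $p$-simplex of $N_q\cobn$ all of whose objects are closed manifolds becomes, via the fixed embedding $\RR^{d-1+n}\hookrightarrow\RR_+\times\RR^{d-2+n}$ underlying the inclusion functor, a $p$-simplex of $N_q\cobdn$ with empty boundary; the associated smooth fibre bundle over $\Delta^p$ and its codimension-zero filtration are carried over unchanged, and --- since that embedding may be chosen with derivative lying pointwise in the identity component of the general linear group, and in any case the embedding dimension is irrelevant in the colimit $n\to\infty$ --- the classifying map of the vertical tangent bundle, hence the resulting object of $S_q\Rfd(\Gr_d(\RR^{d+n}),\Delta^p)$, changes only up to a canonical homotopy. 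Thus the square of bisimplicial sets
\[
\xymatrix{
\sin_{\bullet}N_{\ast}\cobn \ar[rr] \ar[d]_{\tau_{\bullet,\ast}} && \sin_{\bullet}N_{\ast}\cobdn \ar[d]^{\tilde{\tau}_{\bullet,\ast}} \\
S_{\ast}\Rfd(\Gr_d(\RR^{d+n}),\Delta^{\bullet}) \ar@{=}[rr] && S_{\ast}\Rfd(\Gr_d(\RR^{d+n}),\Delta^{\bullet})
}
\]
commutes up to homotopy, its top arrow being induced by the inclusion functor.

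It then remains to set $n=\infty$, take geometric realizations and loop spaces, and feed in the weak equivalences above: the square turns into a homotopy-commutative square comparing $\tau$ and $\tilde{\tau}$, all of whose homotopies come either from the previous step or from the standard zig-zags $\Omega B\calC\simeq\Omega\vert\sin_{\bullet}N_{\bullet}\calC\vert$ and $A(BO(d))\simeq A_{\Delta}(BO(d))$ already present in the definitions of $\tau$ and $\tilde\tau$. Since the inclusion functor induces on loop spaces of classifying spaces precisely the map $\Omega B\cob\hookrightarrow\Omega B\cobd$ of the statement (see the end of Section 2), this is the desired homotopy commutativity of the triangle. Apart from the model identification in the first step, the argument is formal.
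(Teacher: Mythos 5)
Your argument is correct and is essentially the paper's: the paper declares the proposition ``now obvious'' precisely because $\tilde\tau$ is constructed by the same bisimplicial recipe as $\tau$ and restricts to it along the inclusion functor, which is exactly what you spell out (including the minor bookkeeping about the embedding $\RR^{d-1+n}\hookrightarrow\RR_+\times\RR^{d-2+n}$ being harmless in the colimit $n\to\infty$). No substantive difference in approach.
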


In view of Theorem \ref{Genauer}, it follows that the map $\tau$ factors up to homotopy through 
$Q(BO(d)_+):=\Omega^{\infty}\Sigma^{\infty}BO(d)_+$. Our final goal is to show (Theorem \ref{thm:main_result_2})
that the map $\tilde{\tau}$ can be identified up to homotopy with the canonical unit map 
$\eta_{BO(d)}: Q(BO(d)_+) \to A(BO(d))$. 

\begin{rem}
Similarly we can define maps from other $d$-dimensional cobordism categories with corners to $A(BO(d))$ that 
in turn extend the map $\tilde{\tau}$ above. We refer the reader to \cite[Definition 4.1]{Genauer(2008)} 
for the precise definition of these cobordism categories, and to \cite[Proposition 6.1]{Genauer(2008)} for the general 
result determining their homotopy types in the unoriented case. 
\end{rem}

\subsection{Comparison with the $A$-theory characteristic.} Let $M$ be a compact smooth $d$-dimensional manifold, possibly with boundary, 
neatly embedded in $(0,1) \times \RR_+ \times \RR^{\infty}$. We recall from section 2 that this can be viewed as an 
endomorphism of the empty manifold in $\cobd$ and that there is an inclusion map 
$$i_M: B_{\infty}(M) \hookrightarrow \cobd ((\varnothing,0), (\varnothing,1)) \to  \Omega_{\varnothing} B \cobd.$$ 
Let $\chi^{BM}_M$ denote the restriction of the map $\tilde{\tau}$ along $i_M$, i.e. $$\chi^{BM}_M: = \tilde{\tau} \circ i_M.$$  
Our first goal is to compare the map $\chi^{BM}_M$ with the universal parametrized $A$-theory Euler characteristic 
for $M$-bundles. Explicitly, the map $\chi^{BM}_M$ is defined as follows. 
Write $B_M =\vert \sin_\bullet B_{\infty}(M)\vert$ and let $p_M \colon E_M \to B_M$ be the universal smooth $M$-bundle
pulled back from the tautological bundle over $B_{\infty}(M)$ by the canonical weak equivalence $B_M \stackrel{\sim}{\to} 
B_{\infty}(M)$. 
The vertical tangent bundle defines a map over $B_M$,
\[\mathrm{Tan}^{v}(p_M) \colon E_M \to B_M \times BO(d)\]
which induces a functor
\[ \mathrm{Tan}^{v}(p_M)_* \colon w \Rfd \fibr{E_M}{B_M} \to w\Rfd \fibr{BO(d) \times B_M}{B_M}.\]

The retractive space $E_M \times S^0$ determines a point in $\vert w \Rfd (p_M) \vert$. Note that after ``group completion'', this point becomes the bivariant $A$-theory characteristic of 
$p_M$. The scanning construction applied to the image of this specific point under $\mathrm{Tan}^{v}(p_M)_*$, followed by 
``group completion'', define the map:
\[\chi^{BM}_M \colon B_{\infty}(M)\simeq B_M \to \vert W(BO(d), \Delta^{\bullet}) \vert \to A_{\Delta}(BO(d)) \simeq A(BO(d)).\]
As scanning is compatible with ``group completion'', the map $\chi^{BM}_M$ of B\"okstedt-Madsen agrees 
up to homotopy with the image of $\mathrm{Tan}^v(p_M)_* (\chi(p_M))$ under the scanning construction
\[A\fibr{BO(d)\times B_M}{B_M}\to \map(B_M, A_\Delta(BO(d))),\]
once we have identified $A_\Delta(BO(d))$ with $A(BO(d))$ and $B_\infty(M)$ with $B_M$.

On the other hand, we obtain a new map by passing to the parametrized $A$-theory Euler characteristic 
of $p_M$ first, via the coassembly map, and then applying 
$\mathrm{Tan}^v(p_M)_*$ to it. This is the image of the parametrized
$A$-theory Euler characteristic of $p_M$ under the composite map
\begin{multline*}
\holim_{\simp(B_M)} A (E_M \vert_?) \xrightarrow{\mathrm{Tan}^v(p_M)_*} \holim_{\simp(B_M)} A (BO(d) \times ?) \stackrel{h}{\simeq}
\map(B_M, A_{\Delta}(BO(d))) \\ \simeq \map(B_{\infty}(M), A(BO(d)))
\end{multline*}
or, in other words, the composite map
\begin{multline*} \chi^{DWW}_M \colon B_{\infty}(M)\simeq B_M \xrightarrow{\chi^{DWW}(p_M)} A_{B_M}(E_M) \xrightarrow{\mathrm{Tan}^v(p_M)_*} \hocolim_{\simp(B_M)} A (BO(d) \times ?) 
\\ \simeq A(BO(d)) \times B_M
\end{multline*}
regarded as a section of the trivial fibration. 

\begin{thm}\label{thm:main_result_1}
The maps $\chi^{BM}_M$ and $\chi^{DWW}_M$ agree up to homotopy, i.e. the following diagram
of (weak) maps commutes in the homotopy category of spaces,
\[
 \xymatrix{
&& \Omega B \cobd \ar[d]^{\tilde{\tau}} \\
B_{\infty}(M) \ar[rr]^{\chi^{DWW}_M} \ar[rru]^{i_M} && A(BO(d)). 
}
\]

\end{thm}
\begin{proof}
Let $\tilde{\chi}$ denote the image of $\chi(p_M)$ under the push-forward of $\mathrm{Tan}^v(p_M)$, 
\[ \mathrm{Tan}^v(p_M)_* \colon A\fibr{E_M}{B_M} \to A\fibr{BO(d) \times B_M}{B_M}.\]
By Proposition \ref{lem:naturality_of_coassembly}, the coassembly map commutes with the push-forward map $\mathrm{Tan}^v(p_M)_*$. Hence the image of the parametrized $A$-theory characteristic of $p_M$, under the push-forward map
\[ \holim \mathrm{Tan}^v(p_M)_* \colon \holim_{\simp(B_M)} A (E\vert_?) \to \holim_{\simp(B_M)} A (BO(d) \times ?) 
\]
agrees with the image of $\tilde{\chi}$, under the coassembly map. By definition, this point defines the homotopy class of 
$\chi^{DWW}_M$ via the homotopy equivalence $h$. 
On the other hand, the homotopy class of $\chi^{BM}_M$ is the component of the image of $\tilde{\chi}$ under 
the scanning map. Thus we have the following diagram
\[
\xymatrix{
\chi(p_M) \ar@{|->}[rr]^{\mathrm{Tan}^v(p_M)_*} \ar@{|->}[d]^{\coass} && \tilde{\chi} \ar@{|->}[r]^{\scan} \ar@{|->}[d]^{\coass} & [\chi^{BM}_M] \\
\coass(\chi(p_M)) \ar@{|->}[rr]^(.55){\holim \mathrm{Tan}^v(p_M)_*} && \coass(\tilde{\chi}) \ar@{<->}[r]^{h} & [\chi^{DWW}_M] \ar@{=}[u] &  
}
\]
and the agreement of the two homotopy classes of maps, regarded as elements of $\pi_0 \map(B_{\infty}(M), A(BO(d)))$, follows from the commutative diagram of Lemma \ref{lem:technical_lemma}. 
\end{proof}

\begin{rem} \label{additivity}
Here is an informal interpretation of Theorem \ref{thm:main_result_1} that we will not attempt to make rigorous. 
According to this, the last theorem says that the map $\chi^{DWW}_M$ satisfies \emph{additivity} in $M$ in some strong structured 
sense. Consider morphisms in $\cobd$: $W_1$ from $M_0$ to $M_1$, $W_2$ from $M_1$ to $M_2$ and let $W = W_1 \cup_{M_1} W_2$
be the composition. The additivity property expresses \emph{up to homotopy} the $A$-theory characteristic of a $W$-bundle that
admits a splitting into a $W_1$-bundle and a $W_2$-bundle attached along a $M_1$-bundle as the (loop) sum of the $A$-theory
characteristics of the $W_1$- and $W_2$-bundles minus the $A$-theory characteristic of the $M_1$-bundle. For the additivity of 
the parametrized $A$-theory Euler characteristic in this sense, see \cite{Dorabiala(2002)}. In view of Theorem
\ref{thm:main_result_1}, it suffices to give \emph{a choice of such a homotopy} relating the maps 
$i_{W_2 \circ W_1}$, $i_{W_1}$,$i_{W_2}$ and $i_{M_1}$ mapping into the path space of the cobordism category. But, in 
fact, a \emph{canonical such choice} exists simply by the definition of the cobordism category: every pair of 
composable cobordisms defines a $2$-simplex in $N_{\bullet} \cobd$ and therefore there is a canonical homotopy from the 
path represented by the composition of the cobordisms to the composition of the paths represented by the two cobordisms. This
holds more generally for arbitrary strings of composable cobordisms. Finally, it is worth noting that the thick model for $A$-theory 
allows us to include all these coherent choices of homotopies without changing the homotopy type. 
\end{rem}

\subsection{Comparison with the unit map.} The weak equivalence of Theorem \ref{Genauer} implies that $\Omega B \cobd$ admits the structure of an infinite loop space, i.e. it
is weakly equivalent to the 0-th space of an $\Omega$-spectrum. Broadly speaking, this is the same structure as the one induced by 
the operation of making two embedded cobordisms disjoint and taking their disjoint union. However, some careful analysis is required 
to make this operation precise since there is no canonical choice of making two embedded cobordisms disjoint, in a symmetric manner. A possible approach is to construct a $\Gamma$-space consisting of $n$-tuples of cobordisms that are disjoint. Another one would be to follow the methods of \cite{Boekstedt-Madsen(2011)} to construct deloopings of 
$B \cobd$ geometrically. For our purposes here, we will regard $\Omega B \cobd$ as an infinite loop space with the
structure that is induced by $Q(BO(d)_+)$. 

We recall that the space of configurations of finite sets of points in $\RR^n$ labelled by elements of a space $X$ 
$$\coprod_{m \geq 0} \Emb(\{1, \cdots, m\}, \RR^n) \times_{\Sigma_m} X^m$$
can be adjusted up to weak equivalence into a topological monoid whose group completion is weakly equivalent to 
$\Omega^{n}\Sigma^n(X_+)$, see \cite{Segal(1973)}. Such a model is the topological monoid $C_n(X)$ whose elements 
are triples $(S, \xi, t)$ where:
\begin{itemize}
\item[(i)] $t \in [0, \infty)$ and $S \subseteq (0,t) \times \RR^{n-1}$ is a finite subset,
\item[(ii)] $\xi : S \to X$ is a map that defines the labels.
\end{itemize}
This is regarded as a subspace of 
$$\coprod_{ m \geq 0} \RR_{\geq 0} \times (\Emb(\{1, \cdots, m\}, \RR^{n}) \times_{\Sigma_m} X^m)$$
with the subspace topology. This space becomes an associative topological monoid under the operation
$$(S, \xi, t) \cdot (S', \xi', t') \colon = (S \cup T_t(S'), \xi \cup \xi', t + t')$$
where $T_t: (0,t') \times \RR^{n-1} \to (t, t+t') \times \RR^{n-1}$ is the translation by $t$ and 
$\xi \cup \xi': S \cup T_t(S') \to X$ is defined by $\xi$ and $\xi'$ in the obvious way. Letting $n \to \infty$, we define 
$$C_{\infty}(X) \colon = \colim_n C_n(X).$$

By well known results in the theory of infinite loop spaces (\cite{Boardman-Vogt}, \cite{May}, \cite{Segal(1974)}), 
an identification of $\Emb(\{1, \cdots, n\}, \RR^{\infty})$ as a model for $E\Sigma_n$ shows that the group completion 
of the topological monoid $C_{\infty}(X)$ admits infinite deloopings  
and, moreover, that it is weakly equivalent to $Q(X_+)$. Thus we may regard $\Omega B (C_{\infty}(X))$ as the $0$-th
term of an $\Omega$-spectrum. 

For later purposes, we will need such an explicit identification. This allows a comparison between $C_{\infty}(X)$
and the weakly equivalent topological monoid from subsection \ref{unit_map_model}:
$$ |w \Rdd(X, \Delta^\bullet) | = \coprod_{m \geq 0} E \Sigma_m \times_{\Sigma_m} |\sin_{\bullet}(X)|^m.$$

\begin{lem}\label{lem:explicit_identification_of_models_for_stable_homotopy}
(i) There is a natural weak equivalence $$\beta_X \colon |\sin_{\bullet} C_{\infty}(X)| \stackrel{\sim}{\to}
\vert w \Rdd(X, \Delta^\bullet)\vert$$
where $\beta_X$ is a map of topological monoids. Moreover, the map $\beta_X$ induces a weak equivalence between 
the classifying spaces.

(ii) The composite map
\[
\vert \sing_\bullet X \vert\to \vert\sing_\bullet C_\infty(X)\vert\xrightarrow{\beta_X} \vert w\Rdd(X, \Delta^\bullet)\vert 
\to \overline{Q}_{\Delta}(X)
\]
is up to homotopy the adjoint to the stable map $\theta_X$ from Proposition \ref{thick_model_stable_htpy}. (Here the first map in the 
composition is induced by the inclusion $X\to C_1(X)$ which sends $x$ to the configuration of one particle with label $x$, 
sitting at $\frac12\in (0,1)$.)
\end{lem}
\begin{proof}
(i) The map is defined by a simplicial map, denoted also by
$$\beta_X : \sin_{\bullet}C_n(X) \to w \Rdd(X, \Delta^\bullet),$$
and letting $n \to \infty$. An $p$-simplex of $C_n(X)$ defines a bundle as follows 
\[
\xymatrix{
E \ar@{^{(}->}[r] \ar[d]^{\pi} & (0,\infty) \times \RR^{n-1} \times X \times \Delta^p \ar[dl] \\
\Delta^p
}
\]
whose fibers are discrete spaces. Forgetting about the ambient Euclidean space, we obtain an object 
of $\Rd(X, \Delta^p)$:
\[
\xymatrix{
E \sqcup (X \times \Delta^p) \ar@<1ex>[r] \ar[d]^{\pi} & X \times \Delta^p \ar[dl] \ar@<1ex>[l] \\
\Delta^p
}
\]
This defines an object of $\Rdd(X, \Delta^p)$ by taking its image under an equivalence $\Rd(X, \Delta^p) \to \Rdd(X, \Delta^p)$. The correspondence clearly defines a simplicial map. Note 
that the simplicial set $\sin_{\bullet} C_{\infty}(X)$ is a simplicial monoid where the multiplication is defined 
pointwise by the multiplication in $C_{\infty}(X)$. The identity of $\sin_p C_{\infty}(X)$ is the constant map at the 
unit element of $C_{\infty}(X)$ which is defined by the empty subset $S$ with $t=0$. The map $\beta_X$ sends this unit element to 
the zero object of $w \Rdd(X, \Delta^p)$. Furthermore, $w \Rdd(X, \Delta^\bullet)$ is a simplicial monoidal category where the 
monoidal product is defined levelwise by the coproduct functor in $w \Rdd(X, \Delta^n)$ for all $n \geq 0$. Then it is easy to see that the product of two 
$n$-simplices is sent to the coproduct of their values under the simplicial map $\beta_X$. Finally, we note that the map 
$\beta_X$ is induced by $\Sigma_m$-equivariant simplicial maps, for all $m \geq 0$,
$$\sin_{\bullet} \Emb(\{1, \cdots, m\}, (0, \infty) \times \RR^{n-1}) \times \sin_{\bullet}(X^m) \to E\Sigma_m \times (\sin_{\bullet}(X)^m)$$
which is clearly a weak equivalence. (Here $E\Sigma_m$ denotes the nerve of the transport category of $\Sigma_m$,
and not its classifying space.) It follows that $\beta_X$ is a weak equivalence, as required. Then the last claim also follows 
immediately because both monoids are well-pointed.

(ii) This is immediate from the definition of $\theta_X$ in Proposition \ref{thick_model_stable_htpy}.
\end{proof}

Let $\mathcal{C}_0(X)$ be the $0$-dimensional cobordism category with background space $X$ as a tangential structure in the 
sense of \cite[Section 5]{GMTW(2009)}. (Tangential structures are also briefly discussed in subsection \ref{tangential_str}.)
We recall that we work with the model of ``discrete cuts'' as explained in section 2 (see \cite[Remark 2.1(ii)]{GMTW(2009)}). 
Note that the topological monoid $C_{\infty}(X)$ is exactly the reduced version of the $0$-dimensional cobordism category, in 
the sense of \cite[Remark 2(i)]{GMTW(2009)}, with background space $X$ (but without ``discrete cuts''). Translation of 
configurations along the auxiliary coordinate defines a functor 
$$\mathcal{C}_0(X) \longrightarrow C_{\infty}(X)$$
which induces a weak equivalence between the classifying spaces, see \cite[Remark 4.5]{GMTW(2009)}, \cite{Boekstedt-Madsen(2011)}. 

Following the discussion in \cite[\S 3]{Segal(1973)}, the monoid $C_n(X)$ (and similarly the category $\mathcal{C}_0(X)$) 
can be further adjusted in order to obtain a nice description of the group completion map to $\Omega^n \Sigma^n (X_+)$. This adjustment amounts to 
making choices of tubular neighborhoods of the embedded finite sets of points $S \subseteq \RR^n$. Let $\widetilde{C}_n(X)$ be the 
space whose elements are triples 
$(S, \xi, t)$ where
\begin{itemize}
\item[(i)] $t \in [0, \infty)$ and $S \subseteq (0,t) \times \RR^{n-1}$ is a subspace of finitely many disjoint open 
unit $n$-disks,
\item[(ii)] $\xi : S \to X$ is a locally constant map that defines the labels.
\end{itemize}
This space is also an associative topological monoid under an operation defined similarly as above. Restricting to the orgins 
of the embedded $n$-disks defines an inclusion map
$$\iota\colon \widetilde{C}_n(X) \hookrightarrow C_n(X)$$
and it is easy to see that this subspace is a deformation retract of $C_n(X)$. Then there is a collapse map 
$$\widetilde{C}_n(X) \longrightarrow \Omega^n \Sigma^n X_+$$
which induces a weak equivalence between the classifying spaces, see \cite[\S 3]{Segal(1973)}. Letting $n \to \infty$, we 
define $$\widetilde{C}_{\infty}(X) \colon = \colim_n \widetilde{C}_n(X)$$
and Segal \cite{Segal(1973)} shows that the induced map
\[\widetilde C_\infty(X) \to Q(X_+)\]
is a group completion, i.e., it induces a weak equivalence
\begin{equation}\label{eq:segals_map}
 \mathcal{S} \colon \Omega B(\widetilde C_\infty(X)) \to Q(X_+).  
\end{equation}

\begin{cor}\label{cor:comparing_segal_with_BM}
The map $\Omega^\infty(\theta_X)$ from Proposition \ref{thick_model_stable_htpy}, as a map 
in the homotopy category of spaces, is given by the following zigzag of weak equivalences:
\[
Q(X_+)
\xleftarrow{\mathcal{S}} 
\Omega B\vert \sing_\bullet \widetilde C_\infty(X)\vert  
\xrightarrow{\Omega B(\iota)} 
\Omega B\vert \sing_\bullet C_\infty(X) \vert 
\xrightarrow{\Omega B(\beta_X)} 
\overline{Q}_{\Delta}(X).
\]
\end{cor}

\begin{proof}
By part (ii) of Lemma \ref{lem:explicit_identification_of_models_for_stable_homotopy}, the adjoint of $\theta_X$ factors through the inclusion $\vert \sing_\bullet X\vert \to \vert \sing_\bullet C_\infty(X)\vert$,
which one may lift to $\vert \sing_\bullet \tilde C_\infty(X)\vert $. But 
the square
\[\xymatrix{
\vert \sing_\bullet X\vert \ar[r] \ar[d] & X \ar[d]\\
\vert \sing_\bullet \widetilde C_\infty(X) \vert \ar[r] & Q(X_+)
}\]
commutes up to homotopy. This shows that the composite of the inclusion $X\to Q(X_+)$ with the zigzag of the 
statement is adjoint to the stable map $\theta_X$. This implies the claim as all the maps in the 
zigzag are maps of infinite loop spaces.
\end{proof}

Similarly to the definition of $\widetilde{C}_{\infty}(X)$, we can define a variant $\widetilde{\mathcal{C}}_0(X)$ of the cobordism 
category $\mathcal{C}_0(X)$ by letting the configurations have a unit disk as a tubular neighborhood. There is an analogous 
inclusion of categories $\widetilde{\mathcal{C}}_0(X) \hookrightarrow \mathcal{C}_0(X)$ which induces a weak equivalence on objects 
and on morphism spaces. Moreover, the obvious diagram of functors commutes,
\[
\xymatrix{
\widetilde{\mathcal{C}}_0(X) \ar[r] \ar[d] & \mathcal{C}_0(X) \ar[d] \\
\widetilde{C}_{\infty}(X) \ar[r] & C_{\infty}(X). 
} 
\]

Let $D^d$ denote the $d$-dimensional closed disk and $D^d_m$ a disjoint union of $m$ copies of $D^d$. There is a functor
$$\psi: \widetilde{\mathcal{C}}_{0}(\Gr_d(\RR^{\infty})) \longrightarrow \cobd,$$
which, roughly speaking, sends a configuration of $m$ points in $\RR^{\infty}$ labelled by $d$-dimensional linear 
subspaces to the associated configuration of $m$ disjoint linearly embedded $d$-disks in $\RR^\infty$. More precisely, 
it is defined on objects by $(\varnothing, a) \mapsto (\varnothing, a)$. A non-identity morphism 
$(S \subseteq (a, b) \times \RR^{n - 1}, \xi: S \to \Gr_d(\RR^n))$, where $S$ is finite collection of disjoint unit 
$n$-disks and $\xi$ a locally constant map, defines a finite collection of disjoint linearly embedded 
$d$-disks in $(a, b) \times \RR^{n-1}$ by intersecting, for every $n$-disk component $S_i \subseteq S$,
\begin{itemize}
\item[(i)] the smaller closed $n$-disk $S'_i \subseteq S_i$ of radius $\frac12$, with
\item[(ii)] the $d$-plane through the origin of $S_i$ defined by the label at this point.
\end{itemize}
This defines a finite collection of $d$-disks of radius $\frac12$ embedded in $(a,b) \times \RR^{n-1}$. By adding a 
new ambient coordinate, we can fix a canonical way of embedding each of these linearly embedded $d$-disks to
a neatly smoothly embedded $d$-disk in $(a, b) \times \RR_+ \times \RR^{n-1}$. Then the new collection of 
embedded $d$-disks is a morphism in $\cobd$ which we define to be the image of $\psi$ at $(S,\xi)$. 

\begin{lem} \label{group_comp}
The functor $\psi$ induces a weak equivalence between the classifying spaces.
\end{lem}
\begin{proof} The description of the weak equivalence $\tilde{\alpha}$ in section 2 is essentially a 
generalization of the collapse map $\mathcal{S}$ to embedded manifolds of higher dimension. There is a canonical 
path joining the image of an element $(S, \xi, 1) \in \widetilde{C}_{\infty}(\Gr_d(\RR^{\infty}))$ under the collapse map 
$\mathcal{S}$, to the image of the element $\psi(S, \xi)$ under the map $B_{\infty}(D^d_m) \to \Omega B \cobd \to Q(BO(d)_+)$ as 
described in section 2, where $(S, \xi)$ is regarded as a morphism from $(\varnothing, 0)$
to $(\varnothing, 1)$ with $|\pi_0(S)|= m$ and $\psi(S, \xi)$ comes with a choice of a tubular neighborhood by definition. This can 
be used to define a homotopy from the composition 
$$\Omega B (\widetilde{\mathcal{C}}_0(\Gr_d(\RR^{\infty})) \stackrel{\Omega B (\psi)}{\longrightarrow} \Omega B \cobd \stackrel{\tilde{\alpha}}{\to} Q(BO(d)_+)$$
to the composition 
$$\Omega B (\widetilde{\mathcal{C}}_0(\Gr_d(\RR^{\infty})) \stackrel{\sim}{\longrightarrow} \Omega B \widetilde{C}_{\infty}(\Gr_d(\RR^{\infty})) \stackrel{\mathcal{S}}{\to} Q(BO(d)_+),$$
which proves the claim.
\end{proof}

Denote by $$\eta = \eta_{BO(d)} \colon Q(BO(d)_+) \to A(BO(d))$$ 
the unit transformation of $A$-theory evaluated at $BO(d)$. We also let 
$BO(d) = \Gr_d(\RR^\infty)$ in order to simplify the notation in the following proof.

\begin{thm}\label{thm:main_result_2} 
The map $\tilde{\tau}$ can be identified, by a preferred weak equivalence, with the unit map, i.e. 
the following diagram of (weak) maps commutes in the homotopy category of spaces,
\[
\xymatrix{
\Omega B \cobd \ar[rr]^(.45){\tilde\alpha}_(.45){\sim} \ar[rd]_{\tilde{\tau}} && Q(BO(d)_+) \ar[dl]^{\eta} \\
& A(BO(d)). & \\
}
\]
\end{thm}
\begin{proof} 
First note that we may precompose with the weak equivalence $\Omega B(\psi)$ of Lemma \ref{group_comp}. 
As we showed in the proof of that lemma, the composite map
$\tilde\alpha\circ \Omega B(\psi)$ agrees up to homotopy with the map $\mathcal{S}$ from \eqref{eq:segals_map}. 

Then the following diagram of bisimplicial categories shows two maps from a bisimplicial set to a bisimplicial category, 
\[\xymatrix{\ar @{} [drr] |{\Longleftarrow}
\sin_{\bullet} N_{\ast} \widetilde{\mathcal{C}}_0(BO(d)) \ar[r] \ar[d]^{\psi} & \sin_{\bullet} N_{\ast} \widetilde C_\infty(BO(d)) \ar[r]^\iota &
\sin_{\bullet} N_{\ast} C_{\infty}(BO(d)) \ar[d]^{\beta_{BO(d)}} \\
{\sing_\bullet} N_{\ast} \cobd \ar[r]^(.45){\tilde{\tau}} & w S_{\ast} \Rfd(BO(d), \Delta^\bullet) & wS_{\ast}\Rdd(BO(d), \Delta^\bullet) \ar[l] 
}\]
which agree up to a natural tranformation, which is given by including to a bundle of $d$-disks the subbundle of 
points defined by restricting to the origins of the $d$-disks fiberwise. This natural transformation shows 
that the two compositions induce homotopic maps after passing to the geometric realizations.

This shows that the map $\tilde{\tau} \circ \Omega B (\psi)$, as a map in the homotopy category of spaces, 
agrees with the lower composition in the following diagram of maps in the homotopy category of spaces,
\[\xymatrix{
\Omega B \widetilde{\mathcal{C}}_0(BO(d)) \cong {\Omega B}\widetilde C_\infty(BO(d))  
\ar[rr]^{\mathcal{S}}_{\cong} \ar[d]^{\Omega B(\beta_{BO(d)}\circ\iota)}_{\cong}
&&
Q(BO(d)_+)
\ar[d]^{\eta_{BO(d)}} 
\\
\overline{Q}_{\Delta}(BO(d))
\ar[r]^{\eta^{\Delta}_{BO(d)}}
&
A_{\Delta}(BO(d))
\ar[r]^\cong
&
A(BO(d)).
}\]
Finally, it remains to show that the last diagram in the homotopy category commutes. 
By Corollary \ref{cor:comparing_segal_with_BM}, the composite map 
$$Q(BO(d)_+) \to \overline{Q}_{\Delta}(BO(d)),$$ 
going through the left-hand corner of the diagram, agrees with the map $\Omega^\infty(\theta_{BO(d)})$. Then the result 
follows from Proposition \ref{unit_map} where we used this last map to identify the unit map with 
$\eta^{\Delta}_{BO(d)}$.
\end{proof}

Using geometric methods to construct deloopings of $B \cob$, it was shown in \cite{Boekstedt-Madsen(2011)} that the map $\tau$ 
is an infinite loop map. The same result for the map $\tilde{\tau}$ is now a consequence of Theorem \ref{thm:main_result_2}.

\begin{cor} \label{infinite-loop-map}
$\tilde{\tau}$ is a map of infinite loop spaces.
\end{cor}

\begin{rem}
In view of Theorem \ref{thm:main_result_1} and Remark \ref{additivity}, Theorem \ref{thm:main_result_2} can be seen as expressing
a structured form of an additivity property for the factorization of $\chi^{DWW}_M$ through the unit map. The 
combination of the two theorems implies the homotopy commutativity of the outer triangle in Diagram \eqref{smooth-RR-picture} 
of subsection \ref{DWW-RR}:
\[
\xymatrix{
     && Q(BO(d)_+) \ar[d]^\eta\\
B_M \ar[rru] \ar[rr]_{\chi^{DWW}_M}  && A(BO(d))
}\]
for every smooth $d$-manifold $M$ (possibly with boundary). 
\end{rem}

\section{Concluding Remarks}

\subsection{Tangential structures} \label{tangential_str} Similar ideas apply to the case of cobordism categories with tangential structures. 
Let $\theta: X \to \Gr_d(\RR^{\infty})$ be a fibration. The authors of \cite{GMTW(2009)} defined a cobordism category 
$\cob^{\theta}$ of manifolds equipped with a tangential $\theta$-structure, i.e. a lift of the stable tangent bundle to
$X$. The main theorem of \cite{GMTW(2009)} identifies the homotopy type of $B \cob^{\theta}$ with the infinite loop space 
$\Omega^{\infty -1} \mathrm{MT}\theta$ of the Thom spectrum associated with the stable bundle $\theta^*(- \gamma_d)$. Genauer 
\cite{Genauer(2008)} considered the cobordism category $\cobd^{\theta}$ of $d$-dimensional manifolds with boundary and a tangential 
$\theta$-structure and showed that there is a weak equivalence
$$ \tilde{\alpha}^{\theta} : \Omega B \cobd^{\theta} \stackrel{\sim}{\to} Q(X_+).$$
(The main theorem of \cite[Theorem 4.5]{Genauer(2008)} identifies the homotopy type of a cobordism category with corners with the 
infinite loop space associated to a homotopy colimit of Thom spectra. The actual identification of this spectrum with the suspension 
spectrum of the space $X$ is similar to \cite[Proposition 3.1]{GMTW(2009)}, see also \cite[Section 5]{GMTW(2009)}.) 

By replacing the vertical tangent bundle map to $\Gr_d(\RR^{\infty})$ with the $\theta$-structure to $X$, we can similarly 
define a (weak) map  
\[ \tilde{\tau}^{\theta}: \Omega B \cobd^{\theta} \to A(X). \]
Let $M$ be a compact smooth $d$-dimensional manifold. Following the notation of \cite[Section 5]{GMTW(2009)},  let 
$$B_{\infty}^{\theta}(M) = \Emb^{\theta}(M, [0,1] \times \RR^{\infty})/ \Diff(M)$$
where $\Emb^{\theta}(M, [0,1] \times \RR^{\infty})$ denotes the space of (neat) embeddings of $M$ together with 
compatible choices of a $\theta$-structure. The proof of the following $\theta$-version of Theorem 
\ref{thm:main_result_1} is essentially the same.

\begin{thm} \label{thm:main_result_1-theta}
The following diagram of (weak) maps commutes in the homotopy category of spaces,
\[
\xymatrix{
&& \Omega B \cobd^{\theta} \ar[d]^{\tilde{\tau}^{\theta}} \\
B^{\theta}_{\infty}(M) \ar[rr]^{\chi^{DWW}_M} \ar[rru]^{i^{\theta}_M} && A(X). 
}
\]
\end{thm}

Furthermore, following similar arguments, we obtain the $\theta$-versions of Lemma \ref{group_comp} and
Theorem \ref{thm:main_result_2}. 

\begin{lem} \label{group_comp_2}
There is a functor $\psi^{\theta}: \widetilde{\mathcal{C}}_{0}(X) \to \cobd^{\theta}$, defined 
similarly to $\psi$ of Lemma \ref{group_comp}, which induces a weak equivalence between the classifying 
spaces.
\end{lem}

\begin{thm} 
The map $\tilde{\tau}^{\theta}$ can be identified, by a preferred weak equivalence, with the unit map, i.e. the following diagram of 
(weak) maps commutes in the homotopy category of spaces,
\[
\xymatrix{
\Omega B \cobd^{\theta} \ar[rr]^(.45){\tilde\alpha^{\theta}}_(.45){\sim} \ar[rd]_{\tilde{\tau}^{\theta}} && Q(X_+) \ar[dl]^{\eta_X} \\
& A(X). & \\
}
\]
\end{thm}

We also have the following immediate consequence (cf. Corollary \ref{infinite-loop-map}).

\begin{cor}
$\tilde{\tau}^{\theta}$ is a map of infinite loop spaces.
\end{cor}

Finally, we mention two cases of special interest. First, consider the oriented cobordism category $\cobd^+$ defined 
by $\theta$ being 
the orientation cover. In this case, there is a homotopy commutative diagram as follows,
\[
\xymatrix{
\Omega B \cobd^+ \ar[r]^(.4){\sim} &  Q(BSO(d)_+) \ar[r]^(.55){\eta} & A(BSO(d)) \\
\Omega B \cob^+ \ar@{^{(}->}[u] \ar[rru]_{\tau^+} 
}
\] 
The weak equivalence in the diagram is shown in \cite[Proposition 6.2]{Genauer(2008)}.

Second, consider the cobordism category $\cobd(X)$ where $X$ denotes a background space. This is the category associated to
the trivial fibration $\theta: \Gr_d(\RR^{\infty}) \times X \to \Gr_d(\RR^{\infty})$. The correspondence 
$$X \mapsto \pi_* (\Omega B \cobd(X)),$$
viewed as a functor in $X$, is the (unreduced) homology of $X$ with respect to the suspension spectrum $\Sigma^{\infty} BO(d)_+$. 
In this case, we have a homotopy commutative diagram as follows,
\[
\xymatrix{
\Omega B \cobd(X) \ar[dr]^{\tilde{\tau}^X} \ar[rr]^{\sim} && Q((BO(d) \times X)_+) \ar[dl]^{\eta} \\
& A(BO(d) \times X). &
}
\]
We note that since $\tilde{\tau}^X$ is a natural transformation of spectra from an excisive functor, it is determined 
up to homotopy by its canonical factorization through the excisive approximation to the functor 
$X \mapsto \mathbf{A}(BO(d) \times X)$, i.e. $$X \mapsto \Omega^{\infty}(\mathbf{A}(BO(d)) \wedge X_+).$$
(See \cite[8.1-8.3]{Dwyer-Weiss-Williams(2003)}.) The latter factorization is a natural transformation of excisive functors 
and thus it is determined by the map of spectra $\tilde{\tau}$, which has been identified with the unit map at $BO(d)$.

\subsection{A splitting of the cobordism category}
 
A version of the B\"okstedt-Madsen map in the oriented $2$-dimensional case was defined in  \cite{Tillmann(1999)}. This map was used there to deduce the existence of a certain splitting of the homotopy type 
of that cobordism category.  The arguments apply similarly in higher dimensions. Let $M$ be a closed $d$-dimensional manifold embedded in $\RR^{\infty}$, so that it may be regarded as a (endo)morphism in $\cob$. Thus it defines a point in $\Omega B \cob$ and, using the infinite 
loop space structure, we can extend the inclusion of this point to an infinite loop map 
$$j_M \colon QS^0 \to \Omega B \cob.$$
By composing $j_M$ with the composite infinite loop map $$ \Omega B \cob \stackrel{\tau}{\to} A(BO(d)) \stackrel{e_{\ast}}{\to} A(\ast) \stackrel{Tr}{\to} Q S^0 $$
where $e: BO(d) \to \ast$ and $Tr$ denotes Waldhausen's trace map \cite{Waldhausen(1979)}, we obtain a self map of $Q S^0$. By Theorem \ref{thm:main_result_1}, it is easy to see that the homotopy class of this map can be identified with the Euler characteristic of $M$, $\chi(M) \in \mathbb{Z} \cong \pi_0^s. $ Thus, for every such $M$, we obtain a geometric description of a splitting of a copy of the localized sphere spectrum $(QS^0)[\chi(M)^{-1}]$ from 
$\Omega B \cob$, as infinite loop spaces. 

These splittings can also be realized at the level of the Thom spectrum $\mto(d)$ as follows. The bordism class of $M$ defines an element $[M] \in \pi_0 \mto(d)$ represented by a map $QS^0 \to \Omega^{\infty} \mto(d)$. Up to the weak equivalence $\alpha$ of Theorem \ref{GMTW}, this is the same map as $j_M$. Composition with the map $\Omega^{\infty} \mto(d) \to Q(BO(d)_+)$, given by 
the addition of the tautological bundle, and the map $Q(BO(d)_+) \to QS^0$, which collapses $BO(d)$ to a point, 
produces the same self-map of $QS^0$, specified as multiplication by $\chi(M)$. If $M \subseteq \RR^N$, this is represented by the composite 
\[ S^N \to \mathrm{Th}(\nu_M) \to \mathrm{Th}(\gamma^{\perp}_{d, N-d}) \to \mathrm{Th}(\gamma^{\perp}_{d, N-d} \oplus \gamma_{d,N-d}) \cong S^N \wedge \Gr_d(\RR^{N})_+ \to S^N \]
where the first map is the Pontryagin-Thom collapse map, the second map is defined by the classifying map for the 
normal bundle of $M$, the third map is the addition of the tautological bundle and the fourth map is given by  collapsing at the basepoint.

\appendix

\section{Products in Bivariant A-theory}

\numberwithin{thm}{section}

We briefly discuss the construction of products in bivariant $A$-theory (see also \cite{Williams(2000)}). For technical reasons, we 
need to consider a slightly modified model for the Waldhausen category $\Rfd(-)$. For any 
fibration $p: E \to B$, let $\fRfd(p)$ be the Waldhausen subcategory of $\Rfd(p)$ spanned by those retractive spaces $(X, i, r)$ 
over $E$ such that the retraction map $r: X \to E$ is a fibration. This full subcategory is closed in $\Rfd(p)$ under pushouts along a 
cofibration, so it becomes a Waldhausen category with the induced structure from $\Rfd(p)$. It is easy to show that the inclusion 
exact functor $\fRfd(p) \hookrightarrow \Rfd(p)$ induces a weak equivalence in $K$-theory. The drawback of this construction is 
that it is covariantly functorial \emph{only} with respect to fibrations. The readers who prefer to think about $\Rfd(p)$ instead, 
could do so, as long as they replace the retraction maps with fibrations throughout the steps of the construction.

Our goal is to show that for any pair of fibrations $f: E \to V$ and $g: V \to B$, there is a natural map 
$$A(f) \wedge A(g) \longrightarrow A(p)$$
where $p = g \circ f$. This can be obtained from a bi-exact functor 
$$\otimes : \fRfd(f) \times \fRfd(g) \to \fRfd(p)$$ 
which is defined as follows. Given objects $(X, i_X, r_X)$ of $\fRfd(f)$ and $(Y, i_Y,r_Y)$ of 
$\fRfd(g)$, we first consider the pullback $(X', i_{X'}, r_{X'}) : =  f^*(Y,i_Y,r_Y)$ as an object of $\calR(E)$. Then we form the 
external smash product $X \wedge_E X'$ of the two retractive spaces over $E$, i.e. the retractive space over $E \times E$ that is
defined by the pushout diagram
\[
\xymatrix{
X \times E \cup_{E \times E} E \times X' \ar[rr] \ar@{>->}[d] && E \times E \ar@{>->}[d] \\
X \times X' \ar[rr] && X \wedge_E X'
}
\]
Note that the induced retraction $r_X \wedge_E r_{X'}: X \wedge_E X' \to E \times E$ is again a fibration. Finally, by taking the
pullback along the diagonal $\Delta : E \to E \times E$, we obtain a retractive space over $E$ which we denote by 
$(X \otimes Y, i_{X \otimes Y}, r_{X \otimes Y})$. This construction is clearly functorial and it preserves cofibrations and weak 
equivalences. Thus, it remains to check that $(X \otimes Y, i_{X \otimes Y}, r_{X \otimes Y})$ is an object of $\fRfd(p)$. 

The induced retraction $r_{X \otimes Y}$ is a fibration, so it suffices to show that the homotopy finiteness condition is satisfied. Note that 
this is a condition for each point of the base space $B$. By restricting attention to the fibers over 
a point of $B$, throughout the construction, we can assume that $B$ is the one-point space. Under this assumption, 
it suffices to show that pair $(X \otimes Y, E)$ is homotopy finite. 

\begin{lem} \label{CW-homotopy-type}
Consider a diagram as follows, 
\[
\xymatrix{
F' \ar[d] \ar@{>->}[rr] && F \ar[d] \\
E' \ar[dr]_p \ar@{>->}[rr] && E \ar[dl]^q \\
& B & 
}
\]
where $p$ and $q$ are fibrations, $F'$ and $F$ denote the fibers at a point $b \in B$, and the horizontal maps are cofibrations. Suppose also that the fiber pair $(F, F')$ is homotopy finite. If $(B, B_0)$ 
is homotopy equivalent to relative (finite) CW-complex, then so is the pair $(E, E_{|B_0} \cup E')$.
\end{lem}
\begin{proof}
We may assume that $(B, B_0)$ is a relative CW complex. If it is relative finite, then it suffices, by induction, to consider
only the case where $B$ is obtained from $B_0$ by attaching a single $n$-cell along some attaching map $f\colon S^{n-1}\to B_0$. 
Then the inclusion $E_{\vert B_0} \cup E' \to E$ may be described as the map
\[E_{\vert B_0} \cup_{E'_{\vert S^{n-1}}} E'_{\vert D^n} \to E_{\vert B_0} \cup_{E_{\vert S^{n-1}}} E_{\vert D^n}\]
induced by the inclusions on the individual components. As $E'\to E$ is a cofibration, there is a commutative diagram of fiberwise maps over $D^n$
\[\xymatrix{
F'\times D^n  \ar[rr]^\phi_\simeq \ar@{>->}[d] && E'_{\vert D^n} \ar@{>->}[d] \\
F\times D^n \ar[rr]^\phi_\simeq && E_{\vert D^n}  
}\]
where the horizontal maps are fiber homotopy equivalences. It induces a commutative square
\[\xymatrix{
E_{\vert B_0} \cup_{F' \times S^{n-1}} F'\times D^n  
       \ar[rr]^\simeq \ar@{>->}[d] 
&& E_{\vert B_0} \cup_{E'_{\vert S^{n-1}}} E'_{\vert D^n} \ar@{>->}[d] 
\\
E_{\vert B_0} \cup_{F \times S^{n-1}} F\times D^n  
       \ar[rr]^\simeq 
&& E_{\vert B_0} \cup_{E_{\vert S^{n-1}}} E_{\vert D^n} 
}\]
The horizontal maps are homotopy equivalences so it is enough to show that the left-hand column is a homotopy finite pair. This 
follows from the assumption that the pair $(F, F')$ is homotopy finite. In fact, assuming that $(F, F')$ is actually a 
finite relative CW-complex, then the left-hand inclusion in the diagram defines also a finite relative CW-complex which
has one $(n+k)$-cell for each $k$-cell in the relative CW-structure of $(F, F')$.

In the general case, where $(B,B_0)$ is not necessarily relative finite, then the pair $(E, E_{|B_0} \cup E')$ is defined by a 
direct colimit of cofibrations which are homotopy equivalent to relative CW-complexes, so it is also homotopy equivalent to a 
relative CW-complex.
\end{proof}

To finish the proof of the construction, we apply the lemma to the following diagram
\[
\xymatrix{
f^{-1}(r_Y(y)) \ar[d] \ar@{>->}[rr] && (fr_X)^{-1}(r_Y(y)) \ar[d] \\
X' = E \times_E X' \ar[dr] \ar@{>->}[rr] && X \times_E X' \ar[dl] \\
& Y & 
}
\]
where the top row shows the fibers at $y \in Y$. By assumption, the pair $(Y, V)$ is homotopy finite. It follows that the 
pair $(X \times_E X', X \cup_E X')$ is also homotopy finite. But note that the latter pair is relative 
homeomorphic to $(X \otimes Y, E)$ and therefore the required homotopy finiteness condition is satisfied.

%--------------------------------------------
%--------------------------------------------

\bibliographystyle{amsplain}
\providecommand{\MR}{\relax\ifhmode\unskip\space\fi MR }
\providecommand{\MRhref}[2]{%
  \href{http://www.ams.org/mathscinet-getitem?mr=#1}{#2}
}
\providecommand{\href}[2]{#2}

\end{document}